\documentclass[12pt,oneside]{amsart}

\usepackage[hyperindex]{hyperref}

\usepackage{cite}

\usepackage{latexsym}
\usepackage[utf8]{inputenc}
\usepackage{amssymb,amsfonts,amsbsy,amsmath,amsopn,amsthm}
\usepackage{geometry}
\usepackage{enumerate}
\usepackage{color}

\newtheorem{theorem}{Theorem}
\newtheorem{lemma}[theorem]{Lemma}

\newtheorem{proposition}[theorem]{Proposition}
\newtheorem{corollary}[theorem]{Corollary}

\theoremstyle{definition}
\newtheorem{remark}[theorem]{Remark}


\def\sign{\mathrm{sign}}


\title[Lacunary Walsh series in r.i.\ spaces]
{
Lacunary Walsh series in rearrangement invariant spaces
}

\author{Javier Carrillo--Alan\'is}

\address{Departamento de Análisis Matemático, 
	Facultad de Matemáticas,
	Universidad de Sevilla.
	C/ Tarfia S/N.
    41012 Sevilla, Spain.}
\email{fcarrillo@us.es}

\thanks{Partially supported by MTM2015-65888-C4-1-P}


\begin{document}

\begin{abstract} 
We prove that the classical results by Rodin and Semenov and by Lindenstrauss 
and Tzafriri
on the subspace generated by the Rademacher system in  rearrangement invariant spaces
also hold for  lacunary Walsh series.
\end{abstract}

\maketitle 


\section{Introduction}

The Rademacher functions are 
$$
r_k(t):= \sign \sin(2^k \pi t), \qquad t \in [0,1], \quad k \geq 1.
$$
The Rademacher system $(r_k)$ is orthonormal, not complete, independent and identically distributed 
on $[0,1]$. Its behavior in rearrangement invariant function spaces has been pretty well studied. 
Khintchine inequality, \cite{MR1544623}, and the theorems by Rodin and Semenov 
and  Lindenstrauss and Tzafriri
are among the core results
on this topic.

\begin{theorem}[Khintchine inequality]
Given $0<p<\infty$, there exist constants $A_p, B_p >0$ such that 
$$
A_p \Big( \sum_{k \geq 1} a_k^2  \Big)^{1/2} \leq \Big\| \sum_{k \geq 1} a_k r_k \Big\|_{L^p([0,1])}
\leq 
B_p \Big( \sum_{k \geq 1} a_k^2  \Big)^{1/2},
$$
for all $(a_k) \in \ell^2$. 
\end{theorem}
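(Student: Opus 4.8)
The plan is to reduce first to finite sums: by the monotone convergence theorem it suffices to prove both inequalities, with constants independent of $N$, for the partial sums $f_N = \sum_{k=1}^N a_k r_k$, and then to let $N \to \infty$. Throughout I would exploit the two structural features of the system: the $r_k$ are orthonormal, and, being independent and symmetric (each taking the values $\pm 1$ with equal probability), they make the exponential $\mathbb{E}\,e^{\lambda f_N}$ factor as a product. The anchor of the whole argument is the case $p = 2$, where orthonormality gives the exact identity $\|f_N\|_2 = (\sum_{k=1}^N a_k^2)^{1/2} =: S_N$.

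From this identity the two inequalities split into an easy half and a hard half according to whether $p$ lies above or below $2$. For $p \ge 2$ the lower bound is immediate from the monotonicity of $L^p$-norms on a probability space, namely $\|f_N\|_p \ge \|f_N\|_2 = S_N$, so one may take $A_p = 1$; symmetrically, for $0 < p \le 2$ the same monotonicity gives the upper bound $\|f_N\|_p \le \|f_N\|_2 = S_N$ with $B_p = 1$. The real content is therefore the upper bound for large $p$. Here I would compute the moment generating function using independence,
$$
\mathbb{E}\, e^{\lambda f_N} = \prod_{k=1}^N \cosh(\lambda a_k) \le \prod_{k=1}^N e^{\lambda^2 a_k^2/2} = e^{\lambda^2 S_N^2/2},
$$
where the middle step is the elementary estimate $\cosh x \le e^{x^2/2}$. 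Combined with the symmetry of $f_N$ and Markov's inequality, this yields the subgaussian tail bound $\mathbb{P}(|f_N| > t) \le 2\,e^{-t^2/(2S_N^2)}$, and integrating $p\,t^{p-1}$ against this tail produces $\|f_N\|_p \le B_p S_N$ with $B_p = O(\sqrt{p})$.

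I expect this last estimate — the passage from the exponential bound to control of every moment — to be the main technical obstacle, although it is entirely routine once the generating function is in hand. It then remains to handle the lower bound in the delicate range $0 < p < 2$, which I would obtain not by a direct computation but by a Hölder (interpolation) argument feeding on the bounds already established. Fix any exponent $q > 2$ and choose $\theta \in (0,1)$ with $\tfrac{1}{2} = \tfrac{1-\theta}{p} + \tfrac{\theta}{q}$, which is possible precisely because $p < 2 < q$. The log-convexity of $L^s$-norms then gives
$$
S_N = \|f_N\|_2 \le \|f_N\|_p^{\,1-\theta}\,\|f_N\|_q^{\,\theta} \le \|f_N\|_p^{\,1-\theta}\,(B_q S_N)^{\theta},
$$
and solving for $\|f_N\|_p$ produces the desired lower bound $\|f_N\|_p \ge B_q^{-\theta/(1-\theta)} S_N$, so that one may set $A_p = B_q^{-\theta/(1-\theta)}$. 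Letting $N \to \infty$ in all four estimates completes the proof.
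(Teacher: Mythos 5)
Your proof is correct, but there is nothing in the paper to compare it against: the paper quotes Khintchine's inequality as a classical background result with a citation to Khintchine's original paper and gives no proof, so your argument is necessarily a self-contained route. It is in fact the standard modern one, and all the main steps check out: orthonormality anchors $p=2$; monotonicity of $L^p$-norms on a probability space disposes of the two easy halves; independence and symmetry give the factorization $\mathbb{E}\,e^{\lambda f_N}=\prod_k\cosh(\lambda a_k)\le e^{\lambda^2S_N^2/2}$, hence the subgaussian tail $\mathbb{P}(|f_N|>t)\le 2e^{-t^2/(2S_N^2)}$ (Chernoff with $\lambda=t/S_N^2$) and, integrating $pt^{p-1}$ against it, $\|f_N\|_p\le\bigl(p\,2^{p/2}\Gamma(p/2)\bigr)^{1/p}S_N=O(\sqrt{p})\,S_N$; and the Littlewood-style interpolation trick correctly converts the upper bound at any $q>2$ into the lower bound for $0<p<2$, the log-convexity inequality $\|f\|_2\le\|f\|_p^{1-\theta}\|f\|_q^{\theta}$ being valid even for $p<1$ (apply H\"older to $|f|^{2(1-\theta)}\cdot|f|^{2\theta}$ with conjugate exponents $p/(2(1-\theta))$ and $q/(2\theta)$). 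Two minor refinements you should make. First, the appeal to the monotone convergence theorem for the passage $N\to\infty$ is not the right tool, since $|f_N|^p$ is not monotone in $N$; the clean argument applies your finite-sum upper bound to the tails $\sum_{k=M}^{N}a_kr_k$ to see that the partial sums are Cauchy in $L^p$ (in the metric $\|\cdot\|_p^p$ when $p<1$), hence convergent, after which both inequalities pass to the limit by continuity of the norm. Second, in the interpolation step one should dispose of the degenerate case $S_N=0$ and note $\|f_N\|_q<\infty$ (automatic for a finite sum) before dividing. Neither point is a genuine gap.
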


Rodin and Semenov characterized those rearrangement invariant spaces $X$
for which Khintchine inequality holds with $X$ in place of $L^p$, \cite[Theorem 6]{MR0388068}. Denote by $G$ 
the closure in $L^\infty$ of the Orlicz space $L^{M_2}$ generated by $M_2(t):=\exp(t^2)-1$.
 \begin{theorem}[Rodin and Semenov]\label{th:Rodin-Semenov}
  Let $X$ be a rearrangement invariant space on $[0,1]$. The following conditions are equivalent:
  \begin{enumerate}[i)]
   \item There exist constants $A_X, B_X >0$ such that the inequality
   $$
A_X \Big( \sum_{k \geq 1} a_k^2  \Big)^{1/2} \leq \Big\| \sum_{k \geq 1} a_k r_k \Big\|_{X}
\leq 
B_X \Big( \sum_{k \geq 1} a_k^2  \Big)^{1/2}
$$
holds for all $(a_k) \in \ell^2$.
\item The continuous embedding $G \subset X$ holds.
  \end{enumerate}
 \end{theorem}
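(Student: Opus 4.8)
The plan is to treat the two inequalities in (i) separately and to observe that only one of them carries the content of the equivalence. The lower estimate
$$
A_X\Big(\sum_k a_k^2\Big)^{1/2}\le \Big\|\sum_k a_k r_k\Big\|_X
$$
holds for \emph{every} rearrangement invariant space $X$ on $[0,1]$: after the usual normalization one has the continuous inclusion $X\subset L^1([0,1])$, so $\|f\|_{L^1}\le C_X\|f\|_X$, and combining this with the lower half of Khintchine's inequality in $L^1$ gives the claim with $A_X=A_1/C_X$. Thus the equivalence reduces to showing that the \emph{upper} estimate $\|\sum_k a_k r_k\|_X\le B_X(\sum_k a_k^2)^{1/2}$ is equivalent to the embedding $G\subset X$.

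For the implication (ii) $\Rightarrow$ (i) I would argue as follows. By the closed graph theorem the inclusion $G\subset X$ is bounded, say $\|g\|_X\le C\|g\|_G$. It therefore suffices to bound the $G$-norm of a normalized Rademacher sum. Fixing a finite sum $f=\sum_k a_k r_k$ with $\sum_k a_k^2=1$, the standard subgaussian tail estimate $|\{|f|>\lambda\}|\le 2e^{-\lambda^2/2}$ yields $\int_0^1(\exp(|f|^2/c^2)-1)\,dt\le 1$ for a universal $c$, i.e.\ $\|f\|_{M_2}\le c$. Since $f\in L^\infty$, its $G$-norm coincides with its $L^{M_2}$-norm, so $\|f\|_X\le Cc$. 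Rescaling gives the upper estimate for finite sums, and a density/completeness argument extends it to all $(a_k)\in\ell^2$.

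For the converse (i) $\Rightarrow$ (ii) the key reduction is that it is enough to place a single generating function of $G$ inside $X$. If $\|g\|_G\le 1$ then $\int_0^1(\exp(g^{*}(t)^2)-1)\,dt\le 1$, and the monotonicity of $g^*$ forces $g^*(t)\le (\log(1+1/t))^{1/2}\le f_0(t)$, where $f_0(t):=(\log(2/t))^{1/2}$. Hence, once we know $f_0\in X$, rearrangement invariance gives $\|g\|_X=\|g^*\|_X\le C\|f_0\|_X$ and therefore $G\subset X$. To produce $f_0$ from the Rademacher system I would use the central limit theorem: the normalized sums $S_n=n^{-1/2}\sum_{k=1}^n r_k$ converge in distribution to a standard Gaussian $\gamma$, whose distribution is continuous, so the decreasing rearrangements converge, $S_n^*\to\gamma^*$ a.e.\ on $[0,1]$. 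By hypothesis $\sup_n\|S_n\|_X\le B_X$, and the Fatou property of the rearrangement invariant space $X$ then gives $\gamma\in X$ with $\|\gamma\|_X\le B_X$. Since $\gamma^*(t)\sim(2\log(1/t))^{1/2}$ as $t\to 0^+$, we have $f_0(t)\le C(\gamma^*(t)+1)$ on $(0,1]$, and because constants belong to $X$ this yields $f_0\in X$.

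The main obstacle is the second implication, and within it the transfer from the discrete Rademacher sums to the continuous extremal profile $f_0$: one must justify that distributional (central limit) convergence upgrades to a.e.\ convergence of the decreasing rearrangements, and that the Fatou property of $X$ may be invoked to pass the uniform bound $\|S_n\|_X\le B_X$ to the limit $\gamma$. The remaining ingredients---the subgaussian estimate, the inclusion $X\subset L^1$, and the pointwise domination $g^*\le f_0$---are routine consequences of Khintchine's inequality and of the Orlicz description of $G$.
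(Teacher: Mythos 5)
Most of your reconstruction is sound and follows the classical Rodin--Semenov route, which is also the route the paper points to: the paper itself does not prove this theorem (it quotes it from \cite[Theorem 6]{MR0388068}), and its only indication of the argument is the remark, inside the proof of Theorem \ref{th:rodin-semenov-walsh}, that a uniform bound on the normalized sums $v_n=n^{-1/2}\sum_{k=1}^n r_k$ yields $G\subset X$ ``via the Central Limit Theorem''. Your lower estimate from $X\subset L^1$ plus Khintchine in $L^1$, the upper estimate from the subgaussian tail bound (note it actually gives the stronger embedding $L^{M_2}\subset X$, which is fine since $G\subset L^{M_2}$), the pointwise bound $g^*\le f_0$, and the upgrade of distributional convergence to a.e.\ convergence of rearrangements are all correct.

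The genuine gap is the sentence ``the Fatou property of the rearrangement invariant space $X$ then gives $\gamma\in X$''. In the paper's framework an r.i.\ space need \emph{not} have the Fatou property --- the preliminaries state explicitly that this property is equivalent to $X=X''$ --- and the theorem is asserted for arbitrary r.i.\ spaces. Worse, your intermediate claim ($\gamma\in X$, equivalently $f_0\in X$) is actually \emph{false} for some spaces satisfying (i): take $X=G$ itself. Condition (i) holds for $G$ (finite Rademacher sums are bounded, hence lie in $G$, and by the subgaussian estimate the partial sums of $\sum_k a_k r_k$ are Cauchy in $L^{M_2}$, so the infinite sums land in $G$ with the two-sided $\ell^2$ bounds), yet $\gamma\notin G$: membership in $G=(L^{M_2})_0$ forces $\int_0^1 \exp\big(\lambda\,\gamma^*(t)^2\big)\,dt<\infty$ for every $\lambda>0$, while $\gamma^*(t)\sim(2\log(1/t))^{1/2}$ makes this integral infinite for $\lambda\ge 1/2$. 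So $\sup_n\|v_n\|_G<\infty$ while the distributional limit escapes $G$ --- exactly the failure of the Fatou property. The standard repair, and the device the paper itself uses at the end of the proof of Theorem \ref{th:complemented-walsh}, is to run your limiting argument in the second associate space $X''$, which always has the Fatou property and contains $X$ with $\|\cdot\|_{X''}\le\|\cdot\|_X$: this gives $\gamma^*\in X''$, hence $f_0\in X''$ and $L^{M_2}\subset X''$, in particular $G\subset X''$. Then pass to separable parts: since $G=G_0$ (every element of $G$ is an $L^{M_2}$-limit of bounded functions) and the continuous embedding $G\subset X''$ carries $L^\infty$-approximable functions to $L^\infty$-approximable functions, one gets $G\subset (X'')_0=X_0\subset X$. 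With this substitution your argument becomes a complete proof of (i) $\Rightarrow$ (ii) for general r.i.\ spaces.
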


The complementability of the closed linear subspace $[r_k]_X$  generated by the Rade\-macher system 
in a rearrangement invariant space $X$ was also characterized by means of the space $G$, \cite[Theorem 2.b.4]{MR540367} and
\cite{MR541648}. Denote 
by $X'$ the associate space of $X$.
 \begin{theorem}[Rodin and Semenov, Lindenstrauss and Tzafriri]\label{th:complementability}
  Let $X$ be a rearrangement invariant space on $[0,1]$. The following conditions are equivalent:
  \begin{enumerate}[i)]
   \item The subspace $[r_k]_X$ is complemented in $X$.
\item The continuous embeddings $G \subset X$ and $G \subset X'$ hold.
  \end{enumerate}
 \end{theorem}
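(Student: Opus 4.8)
The plan is to carry out both implications through the single \emph{orthogonal} projection
$$
Pf := \sum_{k\ge 1}\Big(\int_0^1 f\, r_k\Big)\, r_k ,
$$
by proving that condition (ii) is equivalent to the boundedness of $P$ on $X$. First I would record two standing facts. By the contraction principle (a change of signs of the $r_k$ is measure preserving), $(r_k)$ is a $1$-unconditional symmetric basic sequence in every r.i.\ space, so $[r_k]_X$ has $(r_k)$ as a basis and $P$ is the natural candidate projection onto it. Also, the \emph{lower} Khintchine estimate $\|\sum c_k r_k\|_Y\ge A(\sum c_k^2)^{1/2}$ holds automatically in any r.i.\ space $Y$ on $[0,1]$, since $Y\hookrightarrow L^1$; thus in either of $X,X'$ only the upper estimate is at stake, and by Theorem~\ref{th:Rodin-Semenov} that upper estimate is equivalent to the corresponding embedding of $G$.

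For $(ii)\Rightarrow(i)$ I would simply estimate $P$. Because $X'$ is again an r.i.\ space on $[0,1]$ with $G\subset X'$, Theorem~\ref{th:Rodin-Semenov} applied to $X'$ gives the two-sided Khintchine inequality there; dualising $\int f\sum a_k r_k$ against the unit ball of $X'$ then yields $\big(\sum_k(\int f r_k)^2\big)^{1/2}\le B_{X'}\|f\|_X$. On the other hand $G\subset X$ gives the upper estimate $\|\sum_k c_k r_k\|_X\le B_X(\sum c_k^2)^{1/2}$. Taking $c_k=\int f r_k$ and combining, $\|Pf\|_X\le B_X B_{X'}\|f\|_X$, and since $Pr_j=r_j$ this exhibits $P$ as a bounded projection of $X$ onto $[r_k]_X$.

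The substance is $(i)\Rightarrow(ii)$, where the main step is a symmetrisation replacing an \emph{arbitrary} bounded projection $Q$ onto $[r_k]_X$ by $P$. Identifying $[0,1]$ with the Cantor group so that $r_k$ is the $k$-th coordinate and $w_A:=\prod_{i\in A}r_i$ are the Walsh functions, each finitely supported $\delta\in\{-1,1\}^{\mathbb N}$ induces a measure preserving isometry $T_\delta$ of $X$ with $T_\delta w_A=(\prod_{i\in A}\delta_i)w_A$, in particular $T_\delta r_k=\delta_k r_k$. Then each $T_\delta^{-1}QT_\delta$ is again a projection onto $[r_k]_X$ of the same norm, and I would form $Q^{(n)}=2^{-n}\sum_{\delta\in\{-1,1\}^n}T_\delta^{-1}QT_\delta$. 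A direct computation on Walsh functions, using that the average over $\delta$ of $(\prod_{i\in A\cap[1,n]}\delta_i)\,\delta_j$ vanishes unless $A\cap[1,n]=\{j\}$, shows that $Q^{(n)}w_A\to Pw_A$ in $X$ for every finite $A$: the limit fixes each $r_k$, kills every $w_A$ with $|A|\neq 1$, and for $A=\emptyset$ leaves only the norm-convergent tail $\sum_{j>n}b_j r_j$ of $Q1=\sum_j b_j r_j$. Since $\|Q^{(n)}\|\le\|Q\|$ and Walsh polynomials are dense in (the separable case of) $X$, this gives $\|P\|\le\|Q\|$, so $P$ is bounded on $X$.

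Once $P$ is bounded on $X$, its $L^2$-selfadjointness $\int(Pf)g=\int f(Pg)$ together with the definition of the associate norm gives $\|Pg\|_{X'}\le\|P\|\,\|g\|_{X'}$, so $P$ is bounded on $X'$ as well. Now boundedness of $P$ on $X$ makes $f\mapsto(\int f r_k)_k$ bounded from $X$ into $\ell^2$ (via the automatic lower estimate $\|Pf\|_{L^2}\le A^{-1}\|Pf\|_X$); dualising against the unit ball of $X$ converts this into the upper estimate $\|\sum a_k r_k\|_{X'}\le B(\sum a_k^2)^{1/2}$, and symmetrically, using $P$ on $X'$ and the Fatou identity $\|\cdot\|_X=\sup_{\|g\|_{X'}\le1}\int(\cdot)g$ (so that $X=X''$), into $\|\sum a_k r_k\|_X\le B(\sum a_k^2)^{1/2}$. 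By Theorem~\ref{th:Rodin-Semenov} applied to $X$ and to $X'$, these are exactly $G\subset X$ and $G\subset X'$. I expect the genuine obstacle to be the symmetrisation: one must verify that the averages $Q^{(n)}$ actually \emph{converge} to $P$ rather than merely cluster, and control the $A=\emptyset$ tail; the explicit Walsh computation is what makes the limit identifiable, while density of Walsh polynomials (reduction to the separable case) and the Fatou property are the technical points to handle with care.
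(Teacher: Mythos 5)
Your strategy is not a different route: it is the Lindenstrauss--Tzafriri symmetrization that the paper itself follows (in the proof of its generalization, Theorem \ref{th:complemented-walsh}, which contains the Rademacher case since $r_{n+1}=w_{2^n}$). The easy direction is the identical duality argument (boundedness of $f\mapsto(\langle f,r_k\rangle)_k$ from $G\subset X'$, combined with the upper Khintchine bound from $G\subset X$; compare Lemma \ref{lemma:lemmaB}). In the hard direction your sign-change operators $T_\delta$ are exactly the paper's operators $T_j$: Lemma \ref{lemma:lemmaA} is precisely the statement that translations of the dyadic group act on Walsh functions by such sign changes. So the comparison reduces to whether your execution of the averaging closes, and there it has two genuine gaps, both of which matter because the statement allows an arbitrary r.i.\ space $X$.

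First, your passage from $Q^{(n)}w_A\to Pw_A$ to $\|P\|\le\|Q\|$ requires Walsh polynomials to be dense in $X$, which fails whenever $X$ is not separable (e.g.\ $X=L^\infty$, $X=L^{M_2}$, Marcinkiewicz spaces); you flag a ``reduction to the separable case'' but do not give one, and note that for such $X$ the theorem still has content (there it typically asserts that $[r_k]_X$ is \emph{not} complemented, and proving that by contradiction still requires the machinery to run in $X$). The paper avoids the issue entirely: it never takes a limit of operators, but proves the exact finite-dimensional identity $P_n=\frac{1}{2^n}\sum_{j=1}^{2^n}T_jQ_nT_j$ on the space $X_n$ of dyadic step functions of order $n$ (equation \eqref{eq:complemented-PQ}), which yields the uniform bound $\|P_n\|\le\|Q\|$ with no density argument, the resulting Khintchine estimates being proved for finite sums and then extended using the Fatou property of the \emph{associate} norm, which always holds. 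Second, and more seriously, your final step deduces the upper estimate in $X$ from the one obtained by duality by invoking ``the Fatou identity $\ldots$ so that $X=X''$''. That is an extra hypothesis, not a fact: an r.i.\ space need not satisfy $X=X''$ (for instance $G''=L^{M_2}\neq G$). What your argument actually proves is $G\subset X''$, and the step from $G\subset X''$ to $G\subset X$ is exactly the idea missing from your proposal, which the paper supplies at the end of its proof: pass to separable parts, $G=G_0\subset (X'')_0=X_0\subset X$. Without this observation (or a standing Fatou assumption, which the statement does not make), the hard direction remains unproved in general.
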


The Walsh system $(w_k)$ on $[0,1]$ consists of all finite products 
of Rademacher functions. Unlike the Rademacher system, the Walsh system is complete and not independent.
However, Sagher and Zhou showed that Khintchine inequality also holds for lacunary sequences of Walsh functions, \cite[Theorem 1]{MR1052010}.

\begin{theorem}[Sagher and Zhou]\label{th:sagherzhou4}
  Given $0<p<\infty$ and $q>1$, there exist constants $A(p,q), B(p,q)>0$ such that, 
  for any sequence $(w_{n_k})$ of Walsh functions with $n_{k+1}/n_k \geq q >1$ for all $k \geq 1$, the inequalities
  $$
  A(p,q)\Big( \sum_{k=1}^\infty a_k^2  \Big)^{1/2}
  \leq \Big\|\sum_{k =1}^\infty a_k w_{n_k}  \Big\|_{L^p([0,1])}
  \leq B(p,q)\Big( \sum_{k=1}^\infty a_k^2  \Big)^{1/2}
  $$
  hold for all $(a_k)_1^\infty \in \ell^2$.
\end{theorem}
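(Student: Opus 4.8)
The plan is to reduce the whole statement to a single family of moment estimates and then bootstrap. It clearly suffices to treat finitely supported $(a_k)$ and pass to the limit, so fix $f=\sum_k a_k w_{n_k}$ a finite sum. Since the Walsh system is orthonormal and $[0,1]$ carries a probability measure, $\|f\|_{L^2}=(\sum_k a_k^2)^{1/2}$, and monotonicity of the $L^p$ norms immediately gives the upper inequality for $0<p\le 2$ and the lower inequality for $2\le p<\infty$, in both cases with constant $1$. Hence I only need to establish (a) the upper inequality for every even integer $p=2m$, which by monotonicity ($\|f\|_p\le\|f\|_{2m}$ for $p\le 2m$) covers all $p\ge2$; and (b) the lower inequality for $0<p<2$, which I will deduce from (a).

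For (a) I would exploit that the Walsh functions form a group under pointwise multiplication, $w_n w_{n'}=w_{n\oplus n'}$ where $\oplus$ is bitwise (dyadic) addition of indices, together with $\int_0^1 w_n=\mathbf 1[n=0]$. Expanding,
\[
\int_0^1 f^{2m}=\sum_{k_1,\dots,k_{2m}} a_{k_1}\cdots a_{k_{2m}}\,\mathbf 1\big[n_{k_1}\oplus\cdots\oplus n_{k_{2m}}=0\big].
\]
In the Rademacher case the indicator is nonzero only for tuples in which each value occurs with even multiplicity, and this count yields the classical bound $C_m(\sum_k a_k^2)^m$. The entire difficulty for lacunary Walsh functions is that the dyadic equation admits genuinely non-paired solutions, because distinct $n_k$ may share high-order binary digits.

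The key lacunarity input, and the step I expect to be the main obstacle, is the following clustering phenomenon. Write $L(n)=\lfloor\log_2 n\rfloor$ for the leading binary digit of $n$. Group equal indices in a solution and let $O$ be the set of distinct values occurring with odd multiplicity, so that $\bigoplus_{k\in O} n_k=0$. If $O\neq\emptyset$ and $K=\max O$, the digit $L(n_K)$ must cancel, hence it is set in at least one other $n_k$ with $k\in O$; any such $n_k\ge 2^{L(n_K)}>n_K/2$, so lacunarity forces $q^{K-k}\le n_K/n_k<2$, i.e. $K-\log_q 2< k\le K$. Thus the largest odd-multiplicity index always has a partner within a window of bounded length $O(\log_q 2)$; iterating this observation organizes the odd-multiplicity indices into clusters of bounded length, inside each of which the summation reduces to the Rademacher perfect-pairing count. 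Carrying out this bookkeeping should give $\int_0^1 f^{2m}\le C(m,q)(\sum_k a_k^2)^m$, equivalently $\|f\|_{2m}\le B(2m,q)(\sum_k a_k^2)^{1/2}$, which establishes (a).

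Finally, for (b) I fix $0<p<2$ and choose $\theta\in(0,1)$ with $\tfrac12=\tfrac{1-\theta}{p}+\tfrac{\theta}{4}$, which exists since $1/p>1/2>1/4$. Log-convexity of the $L^p$ norms gives $\|f\|_2\le\|f\|_p^{1-\theta}\|f\|_4^{\theta}$; inserting the already-proved bound $\|f\|_4\le B(4,q)\|f\|_2$ and cancelling the factor $\|f\|_2^{\theta}$ yields $\|f\|_2\le B(4,q)^{\theta/(1-\theta)}\|f\|_p$. As $\|f\|_2=(\sum_k a_k^2)^{1/2}$, this is precisely the lower inequality with $A(p,q)=B(4,q)^{-\theta/(1-\theta)}$, completing the argument.
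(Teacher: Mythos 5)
Your global architecture is sound: reducing to even moments, getting the upper bound for $0<p\le 2$ and the lower bound for $p\ge 2$ from $\|f\|_{L^2}=\|(a_k)\|_{\ell^2}$ and monotonicity, and recovering the lower bound for $0<p<2$ from the $L^4$ upper bound via Lyapunov's inequality are all correct (and that last interpolation trick is exactly the classical one). The genuine gap is in step (a), which is the entire content of the theorem, and it lies precisely where you flagged it. Your key observation (the largest odd-multiplicity index $K$ has a partner $k\in O$ with $K-k<\log_q 2$) is correct, but the claimed iteration does not go through as described: once you remove $K$ and its partner, the remaining odd-multiplicity indices no longer satisfy $\bigoplus_{j}n_j=0$ but rather $\bigoplus_j n_j = n_K\oplus n_k\neq 0$, so the same observation cannot be reapplied, and the assertion that within each cluster ``the summation reduces to the Rademacher perfect-pairing count'' is false — the cluster elements are \emph{distinct} indices, not matched pairs. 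Moreover, the naive way to exploit the XOR constraint (the largest index is determined by the others) bounds the remaining free sum by powers of $\sum_k|a_k|$, not of $(\sum_k a_k^2)^{1/2}$, which does not close. A correct bookkeeping exists, but it requires a double induction on quantities of the form $\Sigma_d(R):=\sum\{\prod_i|a_{u_i}| : u_1>\dots>u_d,\ \bigoplus_i n_{u_i}=R\}$, tracking a nonzero residual $R$, with a three-way case analysis comparing the top bit of $n_{u_1}$ with the top bit of $R$, together with window-plus-Cauchy--Schwarz and injectivity arguments to keep all sums in $\ell^2$. None of this is in your sketch, so the central moment estimate remains unproved.

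For comparison: the paper does not prove this statement at all — it is quoted from Sagher and Zhou — but the paper's own machinery suggests a much shorter complete proof than your combinatorial route. Lemma \ref{lemma:1} of the paper shows (by an elementary induction on dyadic intervals, independent of this theorem) that for a $2$-lacunary Walsh sequence the partial sums $\sum_{k\le M}a_kw_{n_k}$ have \emph{the same distribution} as the Rademacher sums $\sum_{k\le M}a_kr_k$; hence for $q\ge 2$ all the inequalities follow verbatim from the classical Khintchine inequality. For $1<q<2$ one splits the sequence into $\alpha$ subsequences with ratio $q^\alpha\ge 2$ (take every $\alpha$-th term), gets the upper bound for $p\ge 1$ by the triangle inequality plus Cauchy--Schwarz (cost $\sqrt{\alpha}$), and then recovers the lower bound for all $p$ exactly by your step (b). This matches the shape of the constant $B'(2n,q)\le(1+\sqrt 2)(2+2\alpha)^{1/2n}n^{1/2}$ recorded in equation \eqref{eq:1} of the paper, and it would let you keep your steps 1--3 and (b) while replacing the unproved counting argument entirely.
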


In this paper we show that Theorem \ref{th:Rodin-Semenov} and Theorem \ref{th:complementability}
also hold for
lacunary sequences of Walsh functions (Theorem \ref{th:rodin-semenov-walsh} and Theorem \ref{th:complemented-walsh}). 

We also consider \textit{local} versions of Khintchine inequality.
The first \textit{local} result was given by Zygmund for $L^2$, \cite[Lemma V.8.3]{MR0617944}.
\begin{lemma}[Zygmund] \label{lemma:zygmund-local}
There exist constants $A'_2, B'_2 >0$ such that, for any set $E \subset [0,1]$ with $m(E)>0$,
there exists $N=N(E)$ such that 
$$
A'_2 \Big( \sum_{k \geq N} a_k^2  \Big)^{1/2} 
\leq \Big(\int_E \Big|\sum_{k \geq N} a_k r_k \Big |^2 \frac{dm}{m(E)}\Big)^{1/2}
\leq 
B'_2 \Big( \sum_{k \geq N} a_k^2  \Big)^{1/2},
$$
for all $(a_k)$ in $\ell^2$.
\end{lemma}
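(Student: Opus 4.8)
The plan is to prove Zygmund's local Khintchine inequality in $L^2$, which asserts that on any set $E$ of positive measure, the tail of the Rademacher system behaves—after renormalizing the measure to $dm/m(E)$—almost exactly like an orthonormal basis with constants independent of $E$. Let me think about the key mechanism.

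The Rademacher functions $r_k$ oscillate increasingly fast as $k \to \infty$, taking the value $\pm 1$ on the $2^k$ dyadic intervals of length $2^{-k}$. Their rapid oscillation is the source of asymptotic orthogonality on any fixed set. The cross terms $\int_E r_j r_k \, dm$ for $j \ne k$ involve products that are themselves Walsh functions oscillating at high frequency, so their integrals over $E$ tend to zero as the indices grow. The approach is to make this precise and uniform.

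So the strategy: expand $\int_E |\sum_{k \ge N} a_k r_k|^2 \, dm = \sum_{j,k \ge N} a_j a_k \int_E r_j r_k \, dm$. The diagonal terms give $\sum_{k \ge N} a_k^2 \cdot m(E)$ exactly, since $r_k^2 = 1$. The off-diagonal terms are the error, controlled by $\sum_{j \ne k} |a_j||a_k| |\int_E r_j r_k \, dm|$, and I want this bounded by $\varepsilon \sum_{k \ge N} a_k^2 \cdot m(E)$ for $N$ large enough depending on $E$. Here I'd use that $r_j r_k$ is a Walsh function $w_{n(j,k)}$ whose index grows with $j,k$, so by a Riemann–Lebesgue-type argument (density of the Walsh/dyadic step functions, or approximating $\mathbf{1}_E$ by a finite dyadic step function) the matrix of inner products $(\int_E r_j r_k \, dm)_{j,k \ge N}$ has small off-diagonal part. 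Concretely, approximate $\mathbf{1}_E$ in $L^2$ by a function $\phi$ that is constant on dyadic intervals of length $2^{-N_0}$; then for $j, k > N_0$ with $j \ne k$, $\int \phi\, r_j r_k = 0$ exactly, and the remaining error is controlled by $\|\mathbf{1}_E - \phi\|$. A Schur-test or direct Cauchy–Schwarz bound on the off-diagonal matrix then yields the two-sided inequality with $A_2' = \sqrt{m(E)/2}$, $B_2' = \sqrt{3m(E)/2}$ or similar explicit constants—crucially independent of $E$ itself, only the threshold $N$ depending on $E$.

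The main obstacle is obtaining the uniformity: the constants $A_2', B_2'$ must not depend on $E$, while $N$ is allowed to. The delicate point is that the dyadic-step approximation of $\mathbf{1}_E$ controls the \emph{total} off-diagonal mass once $N$ exceeds the resolution scale $N_0(E)$, but I must verify the bound $\sum_{j,k \ge N,\, j\ne k} |a_j a_k| \,|\int_E r_j r_k| \le \varepsilon\, m(E) \sum_{k\ge N} a_k^2$ holds with a single $\varepsilon < 1$ uniformly over all coefficient sequences. This requires a Schur-type estimate showing the off-diagonal operator norm on $\ell^2$ is small, which is where one must be careful that the approximation error $\|\mathbf{1}_E - \phi\|_1$ (not just $L^2$) enters correctly—since $|\int_E r_j r_k| \le \|\mathbf{1}_E - \phi\|_1$ for $j,k > N_0$, and summing such uniform small bounds against $|a_j||a_k|$ via Cauchy–Schwarz gives the needed control only if one exploits the Walsh structure to get decay in $|j-k|$ rather than a flat bound. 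I would therefore lean on the finite-support / exact-orthogonality feature of the dyadic approximation to kill all but finitely many off-diagonal entries, rendering the remaining sum genuinely small.
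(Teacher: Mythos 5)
Your overall skeleton (expand the square; the diagonal terms give exactly $m(E)\sum_{k\ge N}a_k^2$ since $r_k^2=1$; show the off-diagonal bilinear form is at most $\varepsilon\, m(E)\sum_{k\ge N}a_k^2$ once $N=N(E)$ is large) is the right one, and it is the skeleton of Zygmund's classical argument, which the paper only cites and does not reprove. But the mechanism you propose for the off-diagonal part has a genuine gap. Approximating $\chi_E$ by a dyadic step function $\phi$ of resolution $2^{-N_0}$ does make $\int_0^1 \phi\, r_jr_k\,dm=0$ for all $j\ne k$ with $\max(j,k)>N_0$ (indeed $r_jr_k=w_{2^{j-1}+2^{k-1}}$ has vanishing mean on every dyadic interval of order $N_0$), but the error $\chi_E-\phi$ then contributes to \emph{every one} of the infinitely many off-diagonal entries, each bounded only by the flat constant $\delta:=\|\chi_E-\phi\|_{L^1}$. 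A flat entrywise bound on an infinite matrix gives no control of the $\ell^2$ bilinear form: $\sum_{j\ne k,\; j,k\ge N}|a_j||a_k|\,\delta = \delta\bigl(\sum_{k\ge N}|a_k|\bigr)^2-\delta\sum_{k\ge N}a_k^2$, and an $\ell^2$ sequence need not lie in $\ell^1$; even for finite sums of length $M$ this bound grows like $\delta M\sum a_k^2$. So your closing hope of having ``killed all but finitely many off-diagonal entries'' is not realized: the step-function part kills \emph{all} of its entries, while the error part leaves \emph{infinitely many} nonzero entries, and entrywise smallness is not smallness as an operator on $\ell^2$.

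The missing idea is Bessel's inequality combined with injectivity of the index map. For $j<k$ the products $r_jr_k=w_{2^{j-1}+2^{k-1}}$ are \emph{distinct} Walsh functions (uniqueness of binary expansions), hence orthonormal, so by Bessel $\sum_{j<k}\bigl(\int_E r_jr_k\,dm\bigr)^2\le \|\chi_E\|_{L^2}^2=m(E)<\infty$. Therefore the tail $\eta_N^2:=\sum_{j<k,\; j,k\ge N}\bigl(\int_E r_jr_k\,dm\bigr)^2\to 0$ as $N\to\infty$; choose $N(E)$ with $2\eta_N\le m(E)/2$. Since the Hilbert--Schmidt norm dominates the operator norm (two applications of Cauchy--Schwarz), $\bigl|\sum_{j\ne k,\; j,k\ge N}a_ja_k\int_E r_jr_k\,dm\bigr|\le 2\eta_N\sum_{k\ge N}a_k^2\le \tfrac12 m(E)\sum_{k\ge N}a_k^2$, and the two-sided estimate follows with $A_2'=1/\sqrt2$, $B_2'=\sqrt{3/2}$ after dividing by $m(E)$ (it is the normalization $dm/m(E)$ that makes the constants independent of $E$; your displayed values $\sqrt{m(E)/2}$, $\sqrt{3m(E)/2}$ pertain to the unnormalized integral and are not $E$-free as written). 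If you wish to keep your approximation step, it can be repaired by measuring the error in $L^2$ rather than $L^1$ and applying this same Bessel/Hilbert--Schmidt bound to $\chi_E-\phi$; but at that point the approximation is superfluous, since Bessel applies directly to $\chi_E$.
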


Zygmund's local result has been generalized in two different directions:
first, by considering rearrangement invariant function spaces $X$ with $G \subset X \subset L^2$ 
 (see, for example,  
\cite{MR3372735}, \cite{MR3345658},\cite{MR2801616}, \cite{MR3764631}, \cite{MR1044796} and \cite{MR1412621}); 
and second, by
considering lacunary sequences of Walsh functions in place of the Rademacher system.
In this regard, Sagher and Zhou proved the following result, \cite[Theorem 2]{MR1052010}.

\begin{theorem}[Sagher and Zhou]\label{th:sagherzhou5}
  Given $0<p<\infty$ and $q>1$, there exist constants $A'(p,q),B'(p,q)>0$ such that 
  for any  set $E \subset [0,1]$ of positive measure,
  there exists $N=N(E,q)$ so that
for any sequence $(w_{n_k})$ of Walsh functions with $n_{k+1}/n_k \geq q >1$ for all $k \geq 1$, the inequalities
  $$
  A'(p,q)\Big( \sum_{k=1}^\infty a_k^2  \Big)^{1/2} \leq
\Big(\int_E \Big|\sum_{k \geq N} a_k w_{n_k} \Big |^p \frac{dm}{m(E)}\Big)^{1/p}  
\leq B'(p,q)\Big( \sum_{k=1}^\infty a_k^2  \Big)^{1/2}
  $$
 hold for all $(a_k)_1^\infty \in \ell^2$.
\end{theorem}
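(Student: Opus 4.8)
The plan is to reduce the assertion to the global inequality of Theorem \ref{th:sagherzhou4} by proving an approximate factorization of the localized $L^p$ norm. Writing $f=\sum_{k\ge N}a_k w_{n_k}$ for the tail (which is what enters the stated inequalities), it suffices to produce $N=N(E,q)$ so large that
$$
\frac{1}{m(E)}\int_E |f|^p\,dm=(1+o(1))\int_0^1 |f|^p\,dm\qquad(N\to\infty),
$$
with the $o(1)$ uniform in the lacunary sequence and in $(a_k)$. Once this is available, Theorem \ref{th:sagherzhou4} applied to the right-hand side produces the constants $A'(p,q),B'(p,q)$ and closes the argument. I will lean on two inputs: first, a standard consequence of Theorem \ref{th:sagherzhou4} is that $f/(\sum_k a_k^2)^{1/2}$ lies, uniformly, in the Orlicz class generating $G$ (the $L^r$-constants being $O(\sqrt r)$), so $|f|^p$ is controlled in \emph{every} $L^s([0,1])$ by $(\sum_k a_k^2)^{p/2}$; second, Walsh functions multiply by bitwise addition of indices, $w_iw_j=w_{i\oplus j}$, and a function constant on the dyadic intervals of length $2^{-m}$ has Walsh spectrum contained in $\{0,1,\dots,2^m-1\}$.

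For the factorization I would replace $\mathbf 1_E$ by its average $g$ over the dyadic intervals of length $2^{-m}$. Then $\|\mathbf 1_E-g\|_{L^1}\to0$ as $m\to\infty$, and since $0\le g\le1$ one has $\|\mathbf 1_E-g\|_{L^{s'}}\le\|\mathbf 1_E-g\|_{L^1}^{1/s'}$; combined with the uniform exponential integrability of $f$, Hölder's inequality makes $\int(\mathbf 1_E-g)|f|^p$ negligible. It remains to compare $\int g|f|^p$ with $(\int g)\int_0^1|f|^p=m(E)\int_0^1|f|^p$. For $p=2$, expanding $|f|^2=\sum_{j,k}a_ja_k w_{n_j\oplus n_k}$ gives $\int g|f|^2=\sum_{j,k}a_ja_k\,\hat g(n_j\oplus n_k)$, and only the pairs with $n_j\oplus n_k<2^m$ survive, because the spectrum of $g$ lies in $\{0,\dots,2^m-1\}$. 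The diagonal $j=k$ contributes exactly $m(E)\sum_k a_k^2$. Lacunarity bounds by $L(q)=\lceil\log 2/\log q\rceil$ the number of $n_k$ inside any dyadic block $[c2^m,(c+1)2^m)$, so a surviving off-diagonal pair must sit in a common block, forcing $n_j\oplus n_k$ into a top band $[2^{m-C(q)},2^m)$; since the Walsh--Fourier energy of $\mathbf 1_E$ in that band tends to $0$ as $m\to\infty$, a Cauchy--Schwarz estimate bounds the off-diagonal sum by $o(1)\sum_k a_k^2$. This establishes the factorization for $p=2$.

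The main obstacle is the passage to a general exponent, and in particular the upper bound for $p>2$: neither Jensen's inequality on $(E,dm/m(E))$ (which only handles $p\le2$) nor interpolation against the global $L^r$ bound yields constants independent of $m(E)$. I would overcome this by upgrading the $p=2$ computation to every even integer $p=2\ell$. Expanding $f^{2\ell}$ as a combination of $w_{n_{k_1}\oplus\cdots\oplus n_{k_{2\ell}}}$, the fully paired index patterns reproduce $m(E)\int_0^1 f^{2\ell}$, while each remaining $\mathrm{XOR}$-combination either exceeds $2^m$ (and is annihilated by $g$) or, by the same lacunarity-plus-top-band-decay mechanism, contributes a vanishing fraction. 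The delicate point, and the crux of the whole argument, is the uniform and purely combinatorial control of these higher-order $\mathrm{XOR}$ cancellations among lacunary indices for each fixed $\ell$.

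Granting that control, the uniform estimates $\frac1{m(E)}\int_E f^{2\ell}\,dm\approx\int_0^1 f^{2\ell}\,dm$ for all $\ell$ furnish uniform sub-Gaussian tails for $f$ on the probability space $(E,dm/m(E))$. From these, the two-sided inequality for every $0<p<\infty$, with constants depending only on $p$ and $q$, follows by the standard moment-to-tail comparison, exactly as the global Khintchine inequality of Theorem \ref{th:sagherzhou4} refines Zygmund's $L^2$ statement in Lemma \ref{lemma:zygmund-local}.
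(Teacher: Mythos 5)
First, a point of comparison: the paper itself contains no proof of Theorem \ref{th:sagherzhou5}; it is quoted verbatim from Sagher and Zhou \cite[Theorem 2]{MR1052010}. So your proposal must stand on its own, and it does not. Your $p=2$ step is sound (it is Zygmund's argument transplanted to the Walsh system: for \emph{pairs}, lacunarity really does push every surviving off-diagonal frequency $n_j\oplus n_k$ into a band escaping to infinity, and Bessel's inequality for $\widehat{\chi_E}$ finishes). The gap is exactly the step you flag as the crux and then assume (``granting that control''): for moments $2\ell\ge 4$ the mechanism you describe is not merely unproved, it is false. When $1<q<2$, a fourfold XOR of lacunary frequencies can be small without any two of the frequencies sharing a dyadic block. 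Take $v\ge 1$ fixed, $s$ arbitrarily large, and
$$
n_1=2^s,\qquad n_2=2^s+2^{s-1},\qquad n_3=2^{s+1},\qquad n_4=2^{s+1}+2^{s-1}+v .
$$
The consecutive ratios are $3/2$, $4/3$ and $>5/4$, so (suitably prolonged) this is a $q$-lacunary sequence for $q=5/4$; all pairwise XORs are $\ge 2^{s-1}$, and yet $n_1\oplus n_2\oplus n_3\oplus n_4=v$. Now let $E=\{t\in[0,1]: w_v(t)=1\}$, so $m(E)=1/2$ and $\chi_E=(1+w_v)/2$, and put $f=w_{n_1}+w_{n_2}+w_{n_3}+w_{n_4}$. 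Among the ordered $4$-tuples drawn from $\{n_1,\dots,n_4\}$ exactly $40$ have XOR equal to $0$, and exactly $24$ (the permutations of $(n_1,n_2,n_3,n_4)$) have XOR equal to $v$; hence
$$
\frac{1}{m(E)}\int_E f^4\,dm=\int_0^1 f^4\,dm+\int_0^1 f^4\,w_v\,dm=40+24=64,
\qquad\text{while}\qquad \int_0^1 f^4\,dm=40 .
$$
Since $s$ can be taken as large as you wish, no choice of $N=N(E,q)$ makes the discrepancy $o(1)$: your claimed uniform factorization $\frac{1}{m(E)}\int_E|f|^p\,dm=(1+o(1))\int_0^1|f|^p\,dm$ fails already at $p=4$. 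The offending tuples are annihilated neither by the truncation $g$ (their XOR is $v<2^m$) nor by top-band decay (their XOR is the \emph{fixed} frequency $v$, where $\widehat{\chi_E}(v)=1/2$ does not decay); the near-cancellation is a genuinely fourth-order phenomenon invisible to your pairwise block analysis.

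This collapses the reduction-to-global strategy as you set it up, though not the theorem itself (in the example $64\le B'(4,q)^4\cdot 16$ is harmless): what is true, and what a proof must establish, is two-sided comparability with constants independent of $E$, and the entire difficulty is the coefficient mass carried by these unpaired small-XOR patterns, which your ``vanishing fraction'' hypothesis wishes away. Two further remarks. For the lower bound, the Hölder step bounding $\int(\chi_E-g)|f|^p\,dm$ must yield an error small compared with $m(E)$, not merely $o(1)$, so $m$ and $N$ must be chosen depending on $m(E)$ --- legitimate, but it should be said. More importantly, the standard route to such local statements (for this theorem, and for the Rademacher analogues the paper cites, e.g.\ \cite{MR2801616}, \cite{MR1044796}) avoids higher-order XOR combinatorics altogether: one localizes to dyadic intervals on which $E$ has density close to $1$ (Lebesgue density theorem), observes that the tail of a lacunary Walsh series restricted to such an interval is, after rescaling, again a lacunary Walsh series with essentially the same ratio --- so the global Theorem \ref{th:sagherzhou4} applies on each such interval --- and controls the small exceptional set by Cauchy--Schwarz against a higher global moment. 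If you want to salvage your argument, that is the route to take.
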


We extend this local result for lacunary sequences of Walsh functions on $L^p$
to the space $L^{M_2}$ of functions of square exponential integrability (Theorem \ref{th:local-walsh-LM2}).

\section{Preliminaries}

A Banach function space over $[0,1]$ is a  linear subspace $X$ of  measurable functions on 
$[0,1]$, endowed with a complete norm $\|\cdot\|_X$, such that $g \in X$ 
and $|f| \leq |g|$ a.e.\ implies $f \in X$ and $\|f\|_X \leq \|g\|_X$.
The associate space $X'$ of a Banach function space $X$ consists of all measurable functions $g$ on $[0,1]$ 
for which the associate functional
 $$
 \|g\|_{X'}:=\sup \Big\{  \Big| \int_0^1 f g \, dm \, \Big| : \|f\|_X \leq 1 \Big\}
 $$
is finite.
The inclusion $X' \subset X^*$ always holds between the associate space $X'$ and the dual
Banach  space  $X^*$. 
The spaces $X'$ and $X^*$ are isomorphic if and only if $X$ 
has absolutely continuous norm (that is, order bounded 
increasing sequences are norm convergent).

We  denote the distribution function of a measurable function $f$ on $[0,1]$ by
$m_f (\lambda):= m(\{ x \in [0,1] : |f(x)| > \lambda \})$, for all $ \lambda >0$.
A Banach function space $X$ over $[0,1]$ is \emph{rearrangement invariant} (r.i.)
if $m_f = m_g$ and $f \in X$ imply $g \in X$ and $\|g\|_X = \|f\|_X$.
The associate space $X'$ of an r.i.\ space $X$ is an r.i.\ space. 

The second associate space of $X$ is $X'':=(X')'$. The embedding 
$X \subset X''$ holds for any Banach function space $X$. 
A Banach function space $X$ satisfies the Fatou property if $f_n \in X$ with $\|f_n\|_X \leq M$ for all $n \geq 1$ and 
$0 \leq f_n \leq f_{n+1}\nearrow f$ a.e.\ implies that $f \in X$ and 
$\|f\|_X = \sup_n \|f_n\|_X$. 
 A Banach function space  $X$ satisfies the Fatou property 
if and only if $X''$ coincides with $X$.

For $X$ an r.i.\ space on $[0,1]$, the embeddings $L^\infty \subset X \subset L^1$ hold.
Denote by $X_0$ the closure of $L^\infty $ in $X$.

A Banach space $X$ on $[0,1]$ 
for which the continuous embeddings $L^\infty \subset X \subset L^1$ hold
is called an intermediate space between $L^1$ and $L^\infty$.
The space $X$ is called an \textit{interpolation space} if, 
for every linear operator $T$ such that $T \colon L^1\to L^1$
and $ T\colon L^\infty \to L^\infty$ are continuous, 
then $T\colon X \to X$ is continuous.

Interpolation spaces between $L^1([0,1])$ and $L^\infty([0,1])$ are
(after renorming if necessary) rearrangement invariant,
and rearrangement invariant spaces which satisfy the Fatou property 
or are separable are interpolation spaces between $L^1([0,1])$ and $L^\infty([0,1])$
(for precise details, see \cite[Chp.\ II, \S4]{MR649411}).

A Young function is as function of the form
$$
\Phi(s)=\int_0^s \phi(t) \, dt,
$$
where $\phi \colon [0,+\infty) \to [0,+\infty)$ is increasing, left--continuous and $\phi(0)=0$.
The Orlicz space $L^\Phi$ generated by a Young function $\Phi$ 
consists of all measurable functions  $f$ on $[0,1]$ for which the norm
$$
\|f\|_{L^\Phi}:=\inf\Big\{ \lambda >0 : \int_0^1 \Phi(|f|/\lambda) \,dm \leq 1  \Big\}
$$
is finite. The space $G:=(L^{M_2})_0$, where
 $L^{M_2}$ is the Orlicz space generated by  $M_2(t):=\exp(t^2)-1$,
is of particular interest in the study of the Rademacher system.

We consider the Walsh system according to \textit{Paley's numbering}, that is, $w_0:=1$,
and for $k=a_1 2^0 + \ldots + a_{n} 2^{n-1}$ with $a_0, \ldots, a_n \in \{0,1\}$,
$$
w_k:=r_1^{a_1} \cdot \ldots \cdot r_n ^{a_n}.
$$
A sequence $(w_{n_k})$ of the Walsh system is $q$--lacunary if $n_{k+1}/n_k \geq q >1$ for all $k \geq 1$.
Since $w_{2^n} = r_{n+1}$, the Rademacher system $(r_k)$ is a $2$--lacunary sequence of Walsh functions.

For details on the theory of rearrangement invariant spaces, see \cite{MR928802}, \cite{MR649411} and \cite{MR540367}.

\section{When is the subspace $[w_{n_k}]_X$ isomorphic to $\ell^2$?}

We start proving an extension of the local version of Khintchine inequality for
lacunary sequences of Walsh functions on
$L^p$ by Sagher and Zhou (Theorem \ref{th:sagherzhou5}).
To this aim we introduce the following concept.

Given a set $E \subset [0,1]$ with $m(E)>0$, consider the mapping 
$\rho_E \colon E \to [0,1]$ defined as $\rho_E(x):=m(E \cap [0,x])/m(E)$, $x \in [0,1]$.
There exist sets $A_1 \subset E$ and $A_2 \subset [0,1]$ of measure zero such that 
$\rho_E \colon E \setminus A_1 \to [0,1] \setminus A_2$ is bijective.
For $X$ an r.i.\ space on $[0,1]$,
the local space $X|E$ consists of all measurable functions $f$ on $E$ for which the norm 
$$
\|f\|_{X|E}:=\|f \circ \rho_E^{-1}\|_X
$$
is finite. 
Here, $\rho_E^{-1}$ is the left continuous inverse of the increasing function $\rho_E$.
The space $X|E$ is an r.i.\ space on $E$
endowed with the measure 
$$
m_E(A):=\frac{m(E \cap A)}{m(E)}, \quad  A \subset E, \quad  A \text{ measurable}.
$$
The  spaces
$L^p|E$ and $L^{M_p}|E$ coincide with the spaces in the local results by 
Zygmund for $L^2$ (Lemma \ref{lemma:zygmund-local}), 
Sagher and Zhou for $L^p$ and $L^{M_1}$ (see \cite{MR1044796}, \cite{MR1052010} and \cite{MR1412621}) 
and Carrillo--Alanís for $L^{M_2}$ \cite[Theorem 4]{MR2801616}, 
that is, for $0 < p < \infty$,
$$
\|f\|_{L^p|E} = \Big(\int_E |f|^p \frac{dm}{m(E)} \Big)^{1/p},
$$
and, for $M_p(t):=\exp (t^p) -1$, $p>0$,
$$
\|f\|_{L^{M_p}|E}:=\inf\Big\{ \lambda >0 : \int_E M_p(|f|/\lambda) \,\frac{dm}{m(E)} \leq 1  \Big\}.
$$
The definition of $X|E$ allows us to consider a local space on $E$
in the cases when an explicit expression of the norm of $X$ is not available 
(see \cite{MR3764631} for details). 

Then, we have the following result for $L^{M_2}|E$.

\begin{theorem}\label{th:local-walsh-LM2}
Let  $q>1$  and $E \subset [0,1]$ be a set of positive measure. 
Consider the space $L^{M_2}$ (of functions of square exponential integrability).
There exist constants $A'(M_2,q)$, $B'(M_2,q)>0$ 
and $N=N(E)$ such that, 
   for any $q$--lacunary sequence $(w_{n_k})$ of Walsh functions
   with $n_k \geq N$ for all $k \geq 1$, we have
$$
A'(M_2,q) \Big( \sum_{k=1}^\infty a_k^2  \Big)^{1/2}
\leq \Big\| \sum_{k \geq 1} a_k w_{n_k} \Big\|_{L^{M_2}|E} 
\leq B'(M_2,q) \Big( \sum_{k=1}^\infty a_k^2  \Big)^{1/2},$$
for all $(a_k)_1^\infty \in \ell^2$.
\end{theorem}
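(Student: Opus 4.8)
The plan is to reduce the Orlicz estimate to a scale of $L^{p}|E$ moment estimates and then feed in the local $L^{p}$ inequality of Sagher and Zhou (Theorem \ref{th:sagherzhou5}). On any probability space the exponential Orlicz norm generated by $M_2$ is comparable, with universal constants, to the growth of the $L^{2n}$ norms; since $(E,m_E)$ is a probability space and the explicit formulas for $\|\cdot\|_{L^{M_2}|E}$ and $\|\cdot\|_{L^{2n}|E}$ are exactly the Luxemburg and $L^{2n}$ norms for $m_E$, there are absolute constants $c,C>0$ with
$$c\,\sup_{n\ge1}\frac{\|g\|_{L^{2n}|E}}{\sqrt{2n}}\le\|g\|_{L^{M_2}|E}\le C\,\sup_{n\ge1}\frac{\|g\|_{L^{2n}|E}}{\sqrt{2n}}.$$
One checks this by expanding $M_2(t)=\sum_{n\ge1}t^{2n}/n!$ and using Stirling's formula, and the constants do not depend on $E$. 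Writing $f=\sum_k a_k w_{n_k}$, the lower bound is then immediate: $L^{M_2}|E\hookrightarrow L^{2}|E$, so $\|f\|_{L^{M_2}|E}\ge c'\|f\|_{L^{2}|E}\ge c'A'(2,q)\|(a_k)\|_{\ell^2}$ by the case $p=2$ of Theorem \ref{th:sagherzhou5}.

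For the upper bound the displayed equivalence shows that it suffices to prove a uniform local moment bound $\|f\|_{L^{2n}|E}\le C_0\sqrt{2n}\,\|(a_k)\|_{\ell^2}$ for all $n\ge1$, with $C_0$ independent of $n$, once $n_k\ge N$ for all $k$. Here I would exploit the crucial feature of Theorem \ref{th:sagherzhou5} that its threshold $N=N(E,q)$ does \emph{not} depend on $p$: a single $N$ simultaneously governs every even moment, so the only missing ingredient is that the Sagher--Zhou constant grows like $B'(2n,q)=O(\sqrt{2n})$. This is the expected Khintchine rate, but it is not recorded in the black-box statement and so must be extracted separately.

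To produce the rate I would compare local and global moments through the multiplicative structure of the Walsh system. Since $w_iw_j=w_{i\oplus j}$, expanding gives $f^{2n}=\sum_m c_m w_m$ with $c_0=\int_0^1 f^{2n}\,dm$, which by the global inequality (Theorem \ref{th:sagherzhou4}, with its standard $O(\sqrt{2n})$ rate) is at most $\big(B(2n,q)\|(a_k)\|_{\ell^2}\big)^{2n}$. The local moment differs from $c_0$ only by the off-diagonal sum $\sum_{m\ne0}c_m\,\frac1{m(E)}\int_E w_m\,dm$, where $\frac1{m(E)}\int_E w_m\,dm$ is a Fourier--Walsh coefficient of $\chi_E$. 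Approximating $\chi_E$ in $L^2$ by a Walsh polynomial of bounded degree $2^{M}$, and choosing $N$ so large (depending only on $E$ and $q$) that lacunarity forces every non-diagonal product $w_{n_{k_1}}\cdots w_{n_{k_{2n}}}$ to have index exceeding $2^{M}$, decouples $E$ from the high-frequency part and yields $\int_E f^{2n}\,dm/m(E)\le 2\int_0^1 f^{2n}\,dm$ for all $n$ at once; taking $(2n)$-th roots gives the required uniform rate.

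The main obstacle is precisely this last step: controlling the off-diagonal error \emph{uniformly in} $n$. A naive bound by $\big(\sup_{m\ne0}\tfrac1{m(E)}|\int_E w_m\,dm|\big)\sum_m|c_m|$ fails, since both the number of terms and the $\ell^1$ mass of the Walsh coefficients of $f^{2n}$ grow with $n$. The decoupling above circumvents this by using the bounded-degree approximation of $\chi_E$ together with the condition $n_{k+1}/n_k\ge q$ to guarantee that the only products landing in the retained low-frequency band are the diagonal ones, which reproduce the global moment. Verifying that lacunarity keeps the non-diagonal combinations $n_{k_1}\oplus\cdots\oplus n_{k_{2n}}$ out of the band $[0,2^{M})$ uniformly in the degree $2n$ is the delicate point, and it is where the quantitative choice of $N=N(E)$ is made.
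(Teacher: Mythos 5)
Your reduction of the $L^{M_2}|E$ estimate to uniform $L^{2n}|E$ moment bounds with $O(\sqrt{2n})$ constants, followed by the power--series/Stirling summation, is exactly the skeleton of the paper's proof, and your lower--bound argument is fine. The divergence is in how the crucial growth rate of the \emph{local} constants is obtained: the paper extracts the bound $B'(2n,q)\le(1+\sqrt2)(2+2\alpha)^{1/2n}n^{1/2}$ (its inequality \eqref{eq:1}) from the proof in Sagher--Zhou's paper and then runs the series computation, whereas you try to rederive the local moment bound from the global inequality by frequency decoupling: approximate $\chi_E$ by a Walsh polynomial of degree $<2^M$ and choose $N$ so that every non--diagonal product $w_{n_{k_1}}\cdots w_{n_{k_{2n}}}$ with all $n_{k_i}\ge N$ has Paley index outside $[0,2^M)$.

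That decoupling claim is false for $1<q<2$, which is precisely the hard case of the theorem. A non--diagonal product equals $w_m$ with $m$ the dyadic (XOR) sum of the surviving indices, and the XOR of large lacunary frequencies can be small. Concretely, for $q=1+\varepsilon$ with $\varepsilon<2^{-6}$ and any large $b$, pick $c$ with $2\varepsilon 2^b\le 2^c\le 8\varepsilon 2^b$ and set
$$
m_1=2^b,\qquad m_2=2^b+2^c,\qquad m_3=2^b+2^{b-2},\qquad m_4=2^b+2^{b-2}+2^c+1.
$$
One checks $m_{i+1}/m_i\ge q$ for each $i$, while the bits at positions $b$, $b-2$ and $c$ all cancel in the XOR, so $m_1\oplus m_2\oplus m_3\oplus m_4=1$, i.e.\ $w_{m_1}w_{m_2}w_{m_3}w_{m_4}=w_1=r_1$. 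Since $b$ is arbitrary, no threshold $N=N(E)$ excludes such quadruples. Taking these as the first four terms of a $q$--lacunary sequence and $f=a_1w_{m_1}+\cdots+a_4w_{m_4}$, the coefficient of $w_1$ in $f^4$ is exactly $4!\,a_1a_2a_3a_4\neq0$, and for $E=[0,1/2]$ one has $\frac{1}{m(E)}\int_E w_1\,dm=1$; so the off--diagonal contribution to the local moment never becomes negligible, however large $N$ is. (For $q\ge2$ the top bits of distinct indices are distinct, any nontrivial XOR is at least $N/2$, and your argument does go through --- but that case already follows from Lemma \ref{lemma:1} and classical Rademacher results.) Controlling these low--frequency resonances when $1<q<2$ is exactly what Sagher and Zhou's proof accomplishes; it cannot be recovered from the statement of Theorem \ref{th:sagherzhou4} plus lacunarity alone. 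Note also that your appeal to the ``standard $O(\sqrt{2n})$ rate'' for the global constant $B(2n,q)$ suffers from the same black--box objection you raised against the local theorem, though that one is repairable (split the sequence into finitely many $2$--lacunary subsequences and use Lemma \ref{lemma:1} together with classical Khintchine); the local rate is not, and the honest route is the paper's extraction of \eqref{eq:1}.
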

\begin{proof} 
We will use the inequality 
\begin{equation} \label{eq:1} B'(2n,q) \leq (1+\sqrt{2}) (2+2\alpha)^{1/2n} n^{1/2}\end{equation}
for the constant $ B'(p,q)$ in Theorem \ref{th:sagherzhou5},
where $\alpha$ is the least integer such that $q^\alpha \geq 2$ when $1 < q < 2$, and $\alpha=0$
when $q \geq 2$.
This inequality is not explicitly stated in 
\cite{MR1052010}, but it follows from the proof of Theorem \ref{th:sagherzhou5}.

Let $N=N(E)$ be as in Theorem \ref{th:sagherzhou5}.
The left--hand side inequality follows from the embedding $ L^{M_2} \subset L^1$, which implies
$ L^{M_2}|E \subset L^1|E $,
and from Theorem \ref{th:sagherzhou5} for $p=1$.
Thus, for some constant $C_1>0$,
$$C_1  {A'(1,q)} \Big( \sum_{k=1}^\infty a_k^2  \Big)^{1/2}
  \leq C_1 \Big\| \sum_{k \geq 1} a_k w_{n_k} \Big\|_{L^1|E} 
  \leq \Big\| \sum_{k \geq 1} a_k w_{n_k} \Big\|_{L^{M_2}|E},
$$
for any $q$--lacunary sequence $(w_{n_k})$ with $n_k \geq N$ and 
$(a_k)_1^\infty \in \ell^2$.

In order to prove 
the right--hand side inequality, let
$$
f:=\sum_{k \geq 1} a_k w_{n_k}.
$$
We proceed as in the proof of Theorem 4 of \cite{MR2801616},
replacing \cite[Lemma 5]{MR2801616} by \eqref{eq:1}.
From the power series expansion of $\exp(t^2)-1$ and Theorem \ref{th:sagherzhou5}
for $p=2n$, we have
\begin{equation*}\begin{split}
\int_E \big(\exp|f(t) / \lambda|^2 -1  \big)  \frac{dt}{m(E)} 
&
=\sum_{n \geq 1} \frac{1}{n!\lambda^{2n}}
\int_E \Big| \sum_{k \geq 1} a_k w_{n_k}(t) \Big|^{2n} \frac{dt}{m(E)} \\
&
\leq \sum_{n \geq 1} \frac{B'(2n,q)^{2n}}{n!\lambda^{2n}}\|(a_k)_1^\infty\|_2^{2n}.
\end{split}\end{equation*}
It follows, applying Stirling's formula, that for some absolute constant $C_2>0$,
\begin{equation*}\begin{split}
& \int_E\big( \exp|f(t) / \lambda|^2 -1  \big)  \frac{dt}{m(E)} \\
& \qquad \leq 
\sum_{n \geq 1} \frac{\big((1+\sqrt{2}) (2+2\alpha)^{1/2n} n^{1/2}\big)^{2n}}{n!\lambda^{2n}} \|(a_k)_1^\infty\|_2^{2n} \\
& \qquad =(2+2\alpha)\sum_{n \geq 1} \frac{(1+\sqrt{2})^{2n} n^n}{n!\lambda^{2n}} \|(a_k)_1^\infty\|_2^{2n}\\
& \qquad  \leq C_2 (2+2\alpha)\sum_{n \geq 1}
\Big(
\frac{(1+\sqrt{2})^2 e }{\lambda^2} \|(a_k)_1^\infty\|_2^2
\Big)^n.
\end{split}\end{equation*}
From this inequality it follows, as in the proof of Theorem 4 of \cite{MR2801616},
that there exists a constant $B'(M_2,q)>0$ such that
$$
\Big\| \sum_{k \geq 1} a_k w_{n_k} \Big\|_{L^{M_2}|E} 
\leq B'(M_2,q)\Big( \sum_{k=1}^\infty a_k^2  \Big)^{1/2},$$
and so the proof is complete.
\end{proof}

\begin{remark} 
		(i) Note that  $\alpha$ establishes the dependence between $B'(M_2,q)$ and $q$ in Theorem \ref{th:local-walsh-LM2}.
		In particular, since $q \geq 2$ implies $\alpha=0$, 
		the constant $B'(M_2,q)$ is the same for all $q \geq 2$. 

		(ii)  The non--local case of Theorem \ref{th:local-walsh-LM2}, i.e.\ for $E=[0,1]$, is referred to in \cite[p.\ 247]{MR2914604}.
\end{remark}

The next result is a consequence of Theorem \ref{th:local-walsh-LM2}.

\begin{corollary}\label{cor:1}
Let  $q>1$  and $E \subset [0,1]$ be a set of positive measure. 
Given an r.i.\ space $X$ with $G \subset X$, there exist constants
$A'(X,q)$, $B'(X,q)>0$ 
and $N=N(E)$ such that, 
   for any $q$--lacunary sequence $(w_{n_k})$ of Walsh functions
   with $n_k \geq N$ for all $k \geq 1$, we have
$$
A'(X,q) \Big( \sum_{k=1}^\infty a_k^2  \Big)^{1/2}
\leq \Big\| \sum_{k \geq 1} a_k w_{n_k} \Big\|_{X|E} 
\leq B'(X,q) \Big( \sum_{k=1}^\infty a_k^2  \Big)^{1/2},$$
for all $(a_k)_1^\infty \in \ell^2$.
\end{corollary}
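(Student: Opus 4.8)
The plan is to derive both inequalities from Theorem \ref{th:local-walsh-LM2} together with the two-sided sandwiching of $X|E$ furnished by the local versions of the continuous embeddings $G \subset X$ and $X \subset L^1$. The key observation is that such embeddings localize automatically: if $Y \subset Z$ are r.i.\ spaces on $[0,1]$ with $\|g\|_Z \leq C \|g\|_Y$ for all $g \in Y$, then, straight from the definition $\|f\|_{Y|E} = \|f \circ \rho_E^{-1}\|_Y$, one gets $\|f\|_{Z|E} \leq C \|f\|_{Y|E}$ for every $f$ on $E$; that is, $Y|E \subset Z|E$ with the same constant. I would record this elementary fact first and then apply it to the hypothesis $G \subset X$ and to the universal embedding $X \subset L^1$, which holds for every r.i.\ space on $[0,1]$.

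For the lower bound, fix $f := \sum_{k \geq 1} a_k w_{n_k}$. The embedding $X \subset L^1$ localizes to $\|f\|_{L^1|E} \leq C_1 \|f\|_{X|E}$, and combining this with the $p=1$ lower bound of Theorem \ref{th:sagherzhou5} (valid once $N=N(E)$ is chosen as there and $n_k \geq N$) yields
$$
\frac{A'(1,q)}{C_1} \Big( \sum_{k=1}^\infty a_k^2 \Big)^{1/2} \leq \|f\|_{X|E},
$$
so one may set $A'(X,q) := A'(1,q)/C_1$.

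For the upper bound I would invoke the localized embedding $\|f\|_{X|E} \leq C_2 \|f\|_{G|E}$ coming from $G \subset X$. This is only useful once one knows that $f \circ \rho_E^{-1}$ actually lies in $G = (L^{M_2})_0$, for then $\|f\|_{G|E} = \|f \circ \rho_E^{-1}\|_{L^{M_2}} = \|f\|_{L^{M_2}|E}$, and Theorem \ref{th:local-walsh-LM2} closes the estimate with $B'(X,q) := C_2\, B'(M_2,q)$. Verifying $f \circ \rho_E^{-1} \in G$ is the single point requiring care, and it is where I expect the main (if mild) obstacle to lie: the partial sums $f_m := \sum_{k=1}^m a_k w_{n_k}$ are bounded, hence $f_m \circ \rho_E^{-1} \in L^\infty \subset G$; applying Theorem \ref{th:local-walsh-LM2} to the tails $\sum_{k>m} a_k w_{n_k}$ gives $\|f - f_m\|_{L^{M_2}|E} \to 0$, i.e.\ $f_m \circ \rho_E^{-1} \to f \circ \rho_E^{-1}$ in $L^{M_2}$; and since $G$ is closed in $L^{M_2}$, the limit remains in $G$. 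With this established, both inequalities hold for the same $N=N(E)$, completing the proof.
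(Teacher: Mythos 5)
Your proposal is correct and follows essentially the same route as the paper's proof, which likewise combines Theorem \ref{th:local-walsh-LM2}, Theorem \ref{th:sagherzhou5} for $p=1$, and the observation that $G \subset X \subset L^1$ localizes to $G|E \subset X|E \subset L^1|E$. The only difference is that you spell out a detail the paper leaves implicit, namely the verification (via partial sums, the tail estimate, and closedness of $G$ in $L^{M_2}$) that $f \circ \rho_E^{-1}$ actually lies in $G$ so that the embedding $G \subset X$ applies; this is a legitimate and correctly executed filling-in, not a departure.
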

\begin{proof}
 It follows from Theorem \ref{th:local-walsh-LM2}, Theorem \ref{th:sagherzhou5} for $p=1$,
 and from the fact that $G \subset X \subset L^1$ implies $G|E \subset X|E \subset L^1|E$.
\end{proof}

Note that, in the particular case when $X=L^p$, Corollary \ref{cor:1} 
coincides with the local result for lacunary Walsh series in Theorem \ref{th:sagherzhou5}.
On the other hand, since $r_{n+1}=w_{2^n}$,  Corollary \ref{cor:1}  also allows us to recover the local results 
for the Rademacher system in
 \cite{MR2801616}, \cite{MR3764631}, \cite{MR1044796} and \cite{MR1412621}.

Our next aim is to extend Theorem \ref{th:Rodin-Semenov} to lacunary series 
of Walsh functions. The next technical result is needed. The dyadic intervals of order $n$ are
$I^n_k:=(k/2^n,(k+1)/2^n)$, for $n \geq 0$ and $0 \leq k \leq 2^n-1$.

\begin{lemma} \label{lemma:1}
 Let $q \geq 2$ and  $(w_{n_k})$ be a $q$--lacunary sequence of Walsh functions.
 Then, for any $M \geq 1$ and $(a_k) \in \ell^2$, the functions
 $$R_M:= \sum_{k =1}^M a_k r_k \quad \text{and} \quad  W_M:=\sum_{k =1}^M a_k w_{n_k}$$
 have the same distribution function.
\end{lemma}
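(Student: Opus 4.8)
The plan is to realize both $R_M$ and $W_M$ as the same linear functional $\sum_{k=1}^M a_k \sigma_k$ applied to sign vectors $(\sigma_1,\dots,\sigma_M)$ that are \emph{identically distributed}, namely uniformly distributed on $\{-1,+1\}^M$. Since the Rademacher functions are jointly independent and each takes the values $\pm 1$ with probability $1/2$, the vector $(r_1,\dots,r_M)$ is already uniform on $\{-1,+1\}^M$. Hence it suffices to prove that the vector $(w_{n_1},\dots,w_{n_M})$ of $\pm 1$--valued Walsh functions is also uniform on $\{-1,+1\}^M$, equivalently that $w_{n_1},\dots,w_{n_M}$ are mutually independent. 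Once this is shown, for every sign pattern $\varepsilon\in\{-1,+1\}^M$ both $R_M$ and $W_M$ take the common value $\sum_{k=1}^M a_k\varepsilon_k$ on a set of measure $2^{-M}$, which forces $m_{R_M}=m_{W_M}$.

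To describe each $w_{n_k}$ through Rademacher functions I would use Paley's numbering: the binary expansion of $n_k$ determines a finite set $S_k$ of indices with $w_{n_k}=\prod_{i\in S_k} r_i$, whose largest element $m_k:=\max S_k$ satisfies $2^{m_k-1}\le n_k<2^{m_k}$. The hypothesis $q\ge 2$ enters exactly here: from $n_{k+1}\ge 2n_k\ge 2^{m_k}$ one gets $m_{k+1}>m_k$, so the leading indices are strictly increasing, $m_1<m_2<\cdots<m_M$. This is the only place where the strengthened lacunarity $q\ge 2$ (rather than $q>1$) is used, and I expect verifying it cleanly---keeping track of the off--by--one in Paley's numbering---to be the sole piece of delicate bookkeeping in the argument.

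With the strict monotonicity of the leading indices in hand, independence follows from a standard criterion: $\pm 1$--valued functions of mean zero are mutually independent provided $\int_0^1 \prod_{k\in A} w_{n_k}\,dm=0$ for every nonempty $A\subseteq\{1,\dots,M\}$. Because $r_i^2=1$, the product $\prod_{k\in A} w_{n_k}$ equals $\prod_{i\in T} r_i$, where $T$ is the symmetric difference of the sets $S_k$, $k\in A$. Setting $j:=\max A$, the top index $m_j$ lies in $S_j$ but, since $m_i\le m_{j-1}<m_j$ and $S_i\subseteq\{1,\dots,m_i\}$ for $i<j$, in no other $S_i$; hence $m_j\in T$, so $T$ is nonempty and $\prod_{k\in A} w_{n_k}$ is a nontrivial product of distinct Rademacher functions. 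By independence of the Rademacher system this product integrates to zero, which yields the required independence and completes the reduction.
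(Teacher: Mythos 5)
Your proof is correct, and it takes a genuinely different route from the paper's. The paper argues by induction on $M$: since $W_M$ is constant on dyadic intervals of order $N$ (where $2^{N-1}\le n_M<2^N$) and $q\ge 2$ forces $n_{M+1}\ge 2^N$, the next Walsh function $w_{n_{M+1}}$ splits each level set $A(\varepsilon_1,\dots,\varepsilon_M)$ of $W_M$ into two pieces of equal measure on which it takes the values $\pm 1$; this shows directly that each level set has measure $2^{-M}$, matching the Rademacher case. You instead exploit the multiplicative group structure of the Walsh system: writing $w_{n_k}=\prod_{i\in S_k}r_i$, the hypothesis $q\ge 2$ gives strictly increasing leading indices $m_1<m_2<\cdots<m_M$, so every product $\prod_{k\in A}w_{n_k}$ over nonempty $A$ reduces to a nontrivial Walsh function (the top index $m_{\max A}$ survives in the symmetric difference) and hence has zero integral; the standard criterion for $\pm 1$--valued variables then yields that $(w_{n_1},\dots,w_{n_M})$ is uniform on $\{-1,+1\}^M$, i.e.\ the $w_{n_k}$ are mutually independent. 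Both arguments hinge on exactly the same consequence of $q\ge 2$ (fresh top Rademacher factor at each step), but yours is non-inductive, makes the mutual independence explicit rather than just the equidistribution of the partial sums, and isolates the precise combinatorial condition needed (each index set containing an element outside the union of the preceding ones), so it transfers immediately to more general ``dissociate'' sequences of Walsh functions; the paper's induction, in exchange, stays entirely at the level of dyadic intervals and measures, requiring no independence criterion. One small point of care in your write-up: the criterion you invoke (vanishing of $\int_0^1\prod_{k\in A}w_{n_k}\,dm$ for all nonempty $A$ implies joint uniformity) deserves its one-line proof via the expansion $\prod_{k=1}^M\tfrac{1}{2}(1+\varepsilon_k w_{n_k})=2^{-M}\sum_{A}\prod_{k\in A}\varepsilon_k w_{n_k}$, but this is standard and unproblematic.
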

\begin{proof}
 We proceed by induction on $M$. For $M=1$, we have  $|W_1|=|R_1|$.

  Since $W_M$ is a finite sum of Walsh functions, 
 it is constant on the dyadic intervals of order $N$,
 where $N$ is such that $ 2^{N-1} \leq n_M < 2^N$. 
 For $\varepsilon_i=\pm 1$, consider the set $A(\varepsilon_1 ,\ldots ,\varepsilon_M)$,
 where $w_M$ takes the value $a_1 \varepsilon_1 + \ldots +	 a_M \varepsilon_M$.
  Since $A(\varepsilon_1 ,\ldots ,\varepsilon_M)$ consists of a finite union of dyadic intervals
  of order less or equal than $2^N$, we have 
     $$W_M = \sum_{\varepsilon_1 ,\ldots ,\varepsilon_M = \pm1} ( \varepsilon_1 a_1 + \ldots \varepsilon_M a_M )
  \chi_{A(\varepsilon_1 ,\ldots ,\varepsilon_M)},$$ 
  with $m(A(\varepsilon_1 ,\ldots ,\varepsilon_M))=1/2^M$.
  
  Assume that $W_M$ and $R_M$ have the same distribution function. From the fact that $n_{M+1}/n_M  \geq 2$
 it follows that $n_{M+1} \geq 2^N$, and so the order of $w_{M+1}$ is
  greater or equal than $N+1$. It follows that  each set $A(\varepsilon_1 ,\ldots ,\varepsilon_M)$
 can be divided  into two sets, $A(\varepsilon_1 ,\ldots ,\varepsilon_M,1)$ and $A(\varepsilon_1 ,\ldots ,\varepsilon_M,-1)$,
 consisting on finite unions of dyadic intervals of order less or equal than $N+1$,
 both of the same measure, where $w_{M+1}$ takes values $1$ and $-1$. Thus,
  $W_{M+1}$ has the same distribution function that $R_{M+1}$.
\end{proof}

We prove a version of Theorem \ref{th:Rodin-Semenov}
for lacunary sequences of Walsh functions.

\begin{theorem} \label{th:rodin-semenov-walsh}
Let $X$ be an r.i.\ space on $[0,1]$.  The following conditions are equivalent.
\begin{enumerate}[(i)]
 \item The continuous embedding $G \subset X$ holds, that is, 
there exists a constant $C>0$ such that
$$
\|f\|_X \leq C \|f\|_{L^{M_2}}
$$
for all $f \in L^\infty$.
 \item For any $q >1$, there exist  constants $A(X,q) ,B(X,q)>0$ such that 
 $$ 
 A(X,q) \Big( \sum_{k=1}^\infty a_k^2  \Big)^{1/2} 
 \leq \Big \|\sum_{k \geq 1} a_k w_{n_k} \Big \|_X
 \leq B(X,q) \Big( \sum_{k=1}^\infty a_k^2  \Big)^{1/2} , 
 $$
  for all  $(a_k)_1^\infty \in \ell^2$, and any $q$--lacunary system $(w_{n_k})$ of Walsh functions.
 \item There exist a sequence $(n_k)$ and   constants
$A(X), B(X)>0$ such that
$$A(X) \Big( \sum_{k=1}^\infty a_k^2  \Big)^{1/2} \leq \Big\| \sum_{k \geq 1} a_k w_{n_k} \Big\|_X \leq 
B(X)
\Big( \sum_{k=1}^\infty a_k^2  \Big)^{1/2},$$
for all $(a_k)_1^\infty \in \ell^2$. 
\end{enumerate}

\end{theorem}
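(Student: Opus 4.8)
The plan is to close the cycle (i) $\Rightarrow$ (ii) $\Rightarrow$ (iii) $\Rightarrow$ (i). The implication (ii) $\Rightarrow$ (iii) is immediate: fixing $q=2$ and applying (ii) to any $2$-lacunary sequence, for instance $n_k=2^k$ (so that $w_{n_k}=r_{k+1}$), produces a sequence and constants witnessing (iii).

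For (i) $\Rightarrow$ (ii) I would treat the two estimates separately; the point is that this is the global ($E=[0,1]$) instance of the machinery already built. The left-hand inequality needs no hypothesis on $X$ at all: since $X\subset L^1$ continuously, say $\|\cdot\|_{L^1}\le c\,\|\cdot\|_X$, the left-hand inequality of Theorem \ref{th:sagherzhou4} with $p=1$ gives $A(1,q)\,(\sum a_k^2)^{1/2}\le \|\sum_k a_k w_{n_k}\|_{L^1}\le c\,\|\sum_k a_k w_{n_k}\|_X$. For the right-hand inequality I would invoke $G\subset X$, that is $\|f\|_X\le C\|f\|_{L^{M_2}}$ for $f\in L^\infty$, together with the global counterpart of Theorem \ref{th:local-walsh-LM2}, obtained by repeating its proof verbatim with Theorem \ref{th:sagherzhou5} replaced by the global estimate Theorem \ref{th:sagherzhou4} (so that $E=[0,1]$, $L^{M_2}|E=L^{M_2}$, and no restriction on the starting index arises). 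This yields $\|\sum_k a_k w_{n_k}\|_{L^{M_2}}\le B'(M_2,q)\,(\sum a_k^2)^{1/2}$, and composing with $G\subset X$ gives the right-hand inequality of (ii). Both constants depend only on $X$ and $q$, and, crucially, the cited results already cover every $q>1$ (the range $1<q<2$ being absorbed into the parameter $\alpha$), so the small-$q$ regime requires no separate argument.

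The substance of the theorem is (iii) $\Rightarrow$ (i), where the key device is Lemma \ref{lemma:1}. First note that the left-hand inequality in (iii) forces the indices $n_k$ to be pairwise distinct: were $n_k=n_{k'}$ for some $k\neq k'$, choosing coefficients $\pm1$ at these two coordinates and $0$ elsewhere would make the middle term vanish while the left-hand side stays positive. Being distinct nonnegative integers, the $n_k$ are unbounded, so (after discarding the possible index $0$) we may extract a subsequence $(w_{n_{k_j}})$ with $n_{k_{j+1}}/n_{k_j}\ge 2$, i.e.\ a $2$-lacunary one. Restricting the inequalities of (iii) to coefficient vectors supported on $\{k_j\}$ preserves them, giving $A(X)\,(\sum_j b_j^2)^{1/2}\le \|\sum_j b_j w_{n_{k_j}}\|_X\le B(X)\,(\sum_j b_j^2)^{1/2}$. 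Now Lemma \ref{lemma:1} applies with $q=2$: for every finite $M$ the partial sum $\sum_{j=1}^M b_j w_{n_{k_j}}$ has the same distribution function as $\sum_{j=1}^M b_j r_j$, and since $X$ is rearrangement invariant their $X$-norms coincide. Hence $A(X)\,(\sum_{j\le M} b_j^2)^{1/2}\le \|\sum_{j=1}^M b_j r_j\|_X\le B(X)\,(\sum_{j\le M} b_j^2)^{1/2}$ for all $M$ and all finite $(b_j)$, which is exactly condition (i) of the classical Rodin--Semenov theorem (Theorem \ref{th:Rodin-Semenov}) for the Rademacher system. Therefore $G\subset X$, proving (i).

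The main obstacle is the mismatch between the generality of the sequence in (iii) and the lacunarity hypothesis $q\ge 2$ of Lemma \ref{lemma:1}: (iii) postulates nothing about $(n_k)$ beyond the $\ell^2$-isomorphism, so the proof hinges on thinning $(n_k)$ to a $2$-lacunary subsequence \emph{without} destroying the two-sided estimate, which the equivalence grants automatically upon restricting coordinates. A secondary point to check is that the global version of Theorem \ref{th:local-walsh-LM2} used in (i) $\Rightarrow$ (ii) genuinely holds for all $q$-lacunary sequences with no lower bound on the starting index, so that (ii) is obtained in full strength.
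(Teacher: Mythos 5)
Your proof is correct, and its overall architecture --- the cycle (i) $\Rightarrow$ (ii) $\Rightarrow$ (iii) $\Rightarrow$ (i), the use of Theorem \ref{th:sagherzhou4} with $p=1$ plus the global case of Theorem \ref{th:local-walsh-LM2} for (i) $\Rightarrow$ (ii), and the extraction of a $2$-lacunary subsequence followed by Lemma \ref{lemma:1} for (iii) $\Rightarrow$ (i) --- matches the paper's. The genuine difference is in how (iii) $\Rightarrow$ (i) is finished. You transfer \emph{both} inequalities of (iii) to the Rademacher system (after restricting to coefficients supported on the $2$-lacunary subsequence) and then invoke Theorem \ref{th:Rodin-Semenov} as a black box: the two-sided Khintchine inequality in $X$ for $(r_k)$ is equivalent to $G \subset X$. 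The paper instead uses \emph{only} the upper estimate in (iii): it transfers the uniform bound $\big\|n^{-1/2}\sum_{k\le n} w_{m_k}\big\|_X \le B(X)$ to $v_n := n^{-1/2}\sum_{k\le n} r_k$ via Lemma \ref{lemma:1}, and then re-runs the internal step of Rodin and Semenov's argument (uniform boundedness of $(v_n)$ plus the Central Limit Theorem implies $G \subset X$). So the paper's route proves a formally stronger statement --- the upper estimate alone in (iii) already yields (i) --- at the cost of appealing to the inside of the Rodin--Semenov proof rather than to its statement; your route is self-contained relative to the theorems quoted in the paper. Two points where you are more careful than the paper: you justify that a $2$-lacunary subsequence of $(n_k)$ exists (the lower estimate forces the $n_k$ to be pairwise distinct, hence unbounded), where the paper extracts it without comment; and you note that the global ($E=[0,1]$) case of Theorem \ref{th:local-walsh-LM2} should be run with Theorem \ref{th:sagherzhou4} in place of Theorem \ref{th:sagherzhou5}, so that no restriction $n_k \ge N$ remains and (ii) is obtained for arbitrary $q$-lacunary sequences. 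One small step you should make explicit: your two-sided inequality for Rademacher sums is derived for finite sums, whereas condition (i) of Theorem \ref{th:Rodin-Semenov} is stated for all of $\ell^2$; this passage is routine, since the upper estimate makes the partial sums Cauchy in $X$, so both inequalities pass to the limit.
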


\begin{proof} $\emph{(i)} \Rightarrow \emph{(ii)}$
Assume that $G \subset X$. Since for any r.i.\ space $X$ the continuous embedding $X \subset L^1$ holds,
then \emph{(ii)} follows from Theorem \ref{th:sagherzhou4} for $p=1$
and from Theorem \ref{th:local-walsh-LM2} with $E=[0,1]$. 

$ \emph{(ii)}\Rightarrow \emph{(iii)}$ Is clear.

$\emph{(iii)} \Rightarrow \emph{(i)}$ 
To show $\emph{(i)}$  it suffices to assume that the right--hand side
inequality in $\emph{(iii)}$ holds, that is, for some constant $B(X)>0$,
$$
\Big\| \sum_{k \geq 1} a_k w_{n_k} \Big\|_X \leq B(X)
\Big( \sum_{k=1}^\infty a_k^2  \Big)^{1/2},
$$
for any $(a_k)_1^\infty \in \ell^2$.
Consider a subsequence $(m_k) \subset (n_k)$ such that
$m_{k+1}/m_k \geq 2$ for all $k \geq 1$, 
and let 
$$
s_n:=\frac{1}{\sqrt{n}}\sum_{k=1}^n w_{m_k}, \qquad v_n:=\frac{1}{\sqrt{n}}\sum_{k=1}^n r_k.
$$
Then, from Lemma \ref{lemma:1}, $s_n$ and $v_n$ have the same distribution function, for all $n \geq 1$.
From \emph{(iii)}, we have  $\|s_n\|_X \leq B(X)$. Thus,
$$
\|v_n\|_X = \|s_n\|_X \leq  B(X),
$$
and so $v_n \in X$, and $v_n$ are uniformly bounded in norm. 
Following the steps of the proof  of Theorem \ref{th:Rodin-Semenov}  by Rodin and Semenov 
(see \cite[Theorem 6]{MR0388068}),
$v_n \in X$ with $\|v_n\|_X \leq  B(X)$ for all $n \geq 1$ implies,  via the Central Limit Theorem, that $G \subset X$.
\end{proof}

\begin{remark} Let $(w_{n_k})$ be a $q$--lacunary sequence of Walsh functions, $X$ an 
r.i.\ space on $[0,1]$ and $(a_k) \in \ell^2$ with $\sum_{k \geq 1} a_k r_k \in X$.

If $q \geq 2$, then it follows from  Lemma \ref{lemma:1} that
$$
\Big \|\sum_{k \geq 1} a_k w_{n_k} \Big \|_X
=
\Big \|\sum_{k \geq 1} a_k r_k \Big \|_X.
$$
Combining this fact together with the results mentioned in Introduction, we get at once 
Theorems \ref{th:local-walsh-LM2} and  \ref{th:rodin-semenov-walsh}  in the case when $q \geq 2$.

Consider now the case $1 < q < 2$. From \cite[Theorem 8.1(d)]{MR2525624} and Theorem \ref{th:sagherzhou5} we have 
that $(w_{n_k})$ is majorized in distribution 
by $(r_k)$, that is, there exists a constant $C > 1$ such that
$$
m\Big( \Big\{ t \in [0,1] : \Big| \sum_{k = 1}^M a_k w_{n_k}(t)  \Big| > \lambda \Big\} \Big) 
\leq 
C m\Big(\Big\{ t \in [0,1] : \Big| \sum_{k = 1}^M a_k w_{n_k}(t)  \Big| >  \frac{\lambda}{C} \Big\}\Big),
$$
for all $\lambda >0$, $M \in \mathbb{N}$ and  $a_1, \ldots, a_M \in \mathbb{R}$.
From the boundedness on any r.i.\ space $X$ of 
the dilation operator $\sigma_{1/C}$, 
$$
(\sigma_{1/C}f)(t):= f(t/C), \qquad f \in X, \quad 0<t<C,
$$  
with norm $\| \sigma_{1/C} \|_X \leq C$,
it follows that 
$$
\Big \|\sum_{k \geq 1} a_k w_{n_k} \Big \|_X
\leq C^2
\Big \|\sum_{k \geq 1} a_k r_k \Big \|_X.
$$
Note that this inequality suffices in order to prove 
Theorem \ref{th:local-walsh-LM2} for $E=[0,1]$, and it also 
holds even in the case when $G \nsubseteq X$.
The opposite majoration, that is, 
$(r_k)$ being majorized in distribution by $(w_{n_k})$,
is an open problem for which we
have not found any references. 
It is a relevant question in the context of this paper,
since it would imply that lacunary Walsh series and 
Rademacher series have equivalent norms in any r.i.\ space.
\end{remark}

\section{Complementability} 

The main result of this section is the following.

\begin{theorem}\label{th:complemented-walsh}
Let $X$ be an r.i.\ space on $[0,1]$ which is an
interpolation space between $L^1([0,1])$ and $L^\infty([0,1])$. 
 The following conditions are equivalent.
\begin{enumerate}[(i)]
 \item The continuous embeddings $G \subset X$ and $G \subset X'$ hold, that is,
there exist constants $C, C'>0$ such that 
$$
\|f\|_X \leq C \|f\|_{L^{M_2}}, \qquad \|f\|_{X'} \leq C' \|f\|_{L^{M_2}},
$$
for all $f \in L^\infty$.
\item For any $q >1$ and any $q$--lacunary sequence $(w_{n_k})$ of Walsh functions,
the space $[w_{n_k}]_X$ is complemented in $X$.
\item There exists $q>1$ and a  $q$--lacunary sequence $(w_{n_k})$ of Walsh functions such that
$[w_{n_k}]_X$  is complemented in $X$.
\end{enumerate}
\end{theorem}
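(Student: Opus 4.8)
The plan is to establish the cycle $(i)\Rightarrow(ii)\Rightarrow(iii)\Rightarrow(i)$, in which $(ii)\Rightarrow(iii)$ is immediate. The two substantial implications mirror the Rademacher results of Theorem \ref{th:complementability}: for $(i)\Rightarrow(ii)$ I would exhibit an explicit bounded projection built from Theorem \ref{th:rodin-semenov-walsh}, while for $(iii)\Rightarrow(i)$ I would run an averaging argument over the dyadic group to replace an arbitrary projection by the orthogonal one, and then extract the two embeddings by duality against the (always valid) lower Khintchine bound.

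For $(i)\Rightarrow(ii)$, fix $q>1$ and a $q$--lacunary sequence $(w_{n_k})$, and consider the orthogonal projection
\[
Pf:=\sum_{k\ge1}\Big(\int_0^1 f\,w_{n_k}\,dm\Big)\,w_{n_k},
\]
which is well defined on $X$ since $X\subset L^1$ and $w_{n_k}\in L^\infty$. Writing $c_k:=\int_0^1 f\,w_{n_k}\,dm$, the embedding $G\subset X$ and Theorem \ref{th:rodin-semenov-walsh} give $\|Pf\|_X\le B(X,q)\big(\sum_k c_k^2\big)^{1/2}$. To control the $\ell^2$ norm of $(c_k)$ I would use duality: for $\|(a_k)\|_{\ell^2}\le1$,
\[
\Big|\sum_k a_k c_k\Big|=\Big|\int_0^1 f\,\sum_k a_k w_{n_k}\,dm\Big|\le\|f\|_X\,\Big\|\sum_k a_k w_{n_k}\Big\|_{X'},
\]
and since $G\subset X'$ and $X'$ is an r.i.\ space, Theorem \ref{th:rodin-semenov-walsh} applied to $X'$ bounds the last factor by $B(X',q)$. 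Taking the supremum over $(a_k)$ yields $\big(\sum_k c_k^2\big)^{1/2}\le B(X',q)\|f\|_X$, whence $\|Pf\|_X\le B(X,q)B(X',q)\|f\|_X$. As $P$ is idempotent with range $[w_{n_k}]_X$ (the $\ell^2$ control of $(c_k)$ forcing convergence of the series in $X$), this gives complementability for \emph{every} $q$ and \emph{every} $q$--lacunary sequence.

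For $(iii)\Rightarrow(i)$, suppose $[w_{n_k}]_X$ is complemented by a projection $P$. Since the Walsh functions are the characters of the dyadic group, each dyadic translation $U_s f:=f(\cdot\oplus s)$ is measure--preserving, hence isometric on $X$, and satisfies $U_s w_{n_k}=w_{n_k}(s)\,w_{n_k}$; in particular $U_s$ preserves $[w_{n_k}]_X$, so $U_s P U_s$ is again a projection onto $[w_{n_k}]_X$. Averaging these conjugates over $s$ produces a projection $\widetilde P$ commuting with every $U_s$, hence diagonal in the Walsh system, and the range condition forces $\widetilde P=Q$, the orthogonal projection $Qf=\sum_k(\int f\,w_{n_k}\,dm)\,w_{n_k}$, with $\|Q\|_X\le\|P\|_X$. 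Because $Q$ is self--adjoint for the $L^2$ pairing, for $g\in X'$ one has $\|Qg\|_{X'}=\sup_{\|f\|_X\le1}\big|\int g\,(Qf)\,dm\big|\le\|Q\|_X\|g\|_{X'}$, so $Q$ is also bounded on $X'$. Now, as $X,X'\subset L^1$, Theorem \ref{th:sagherzhou4} with $p=1$ gives the lower bound $\|\sum_k b_k w_{n_k}\|_{Y}\ge A(1,q)\big(\sum_k b_k^2\big)^{1/2}$ for $Y\in\{X,X'\}$. Hence, for finitely supported $(a_k)$ and $f=\sum_k a_k w_{n_k}$,
\[
\|f\|_X=\sup_{\|g\|_{X'}\le1}\Big|\sum_k a_k\!\int\! w_{n_k}g\,dm\Big|\le\Big(\sum_k a_k^2\Big)^{1/2}\sup_{\|g\|_{X'}\le1}\Big(\sum_k\Big|\int w_{n_k}g\,dm\Big|^2\Big)^{1/2},
\]
and the inner supremum is $\le\|Q\|_{X'}/A(1,q)$ via the lower bound in $X'$ and $\|Qg\|_{X'}\le\|Q\|_{X'}$. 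This is the upper Khintchine bound in $X$, so $G\subset X$ by Theorem \ref{th:rodin-semenov-walsh}; the symmetric computation, using $\|Q\|_X$ and the lower bound in $X$, gives $G\subset X'$.

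The technical heart, and the step I expect to be the main obstacle, is the averaging: making sense of $\widetilde P$ as a bounded operator on $X$ and identifying it with $Q$. I would realize it as a limit of finite averages $\widetilde P_N$ over the group of order--$N$ dyadic translations, each a convex combination of isometric conjugates of $P$ and hence of norm $\le\|P\|_X$; these converge to $Q$ (exactly on finite Walsh sums, and in $L^1$ in general), and passing to the limit in $X$ to obtain $\|Q\|_X\le\|P\|_X$ and $Q(X)\subset X$ is precisely where the hypothesis that $X$ is an interpolation space between $L^1$ and $L^\infty$ is used. Once $Q$ is under control, the duality estimates and the appeals to Theorems \ref{th:rodin-semenov-walsh} and \ref{th:sagherzhou4} are routine.
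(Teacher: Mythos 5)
Your implication $(i)\Rightarrow(ii)$ is correct and coincides with the paper's argument: the duality bound on the coefficient sequence is exactly the content of the paper's Lemma \ref{lemma:lemmaB}, and the rest is Theorem \ref{th:rodin-semenov-walsh} applied to $X$. Both genuine gaps are in $(iii)\Rightarrow(i)$.

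First, the averaging. You rightly call the construction of the symmetrized projection $\widetilde P$ the main obstacle, but you do not resolve it, and the limiting procedure you sketch does not work as stated: the finite averages $\widetilde P_N f$ converge to $Qf$ coefficientwise (and on Walsh polynomials), but nothing forces convergence in the norm of $X$ for a general $f$, and in a non-reflexive, possibly non-separable r.i.\ space there is no compactness from which to extract a limit operator. This matters because your later duality step applies $Q$ to arbitrary $g$ with $\|g\|_{X'}\le 1$, so you genuinely need a globally defined bounded operator, not just its action on polynomials. The paper avoids all limits: it first uses Proposition \ref{prop:basic_walsh} (this is where the interpolation hypothesis enters, via the uniform boundedness of the dyadic averaging operators $A_s$) to write the given projection through coordinate functionals, then compresses it to the finite-dimensional space $X_n$ spanned by the dyadic indicators of order $n$, obtaining $Q_nf=\sum_{k=1}^{N}q_{m_k}(f)w_{m_k}$, and proves the exact algebraic identity \eqref{eq:complemented-PQ}, namely $P_n=2^{-n}\sum_{j=1}^{2^n}T_jQ_nT_j$ with $T_j(w_k)=w_j(I^n_k)\,w_k$ isometries of $X_n$; the combinatorial input is the symmetry $w_k(I^n_j)=w_j(I^n_k)$ of Lemma \ref{lemma:lemmaA}. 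This yields $\sup_n\|P_n\|\le\|Q\|$ outright, and since every subsequent Khintchine estimate is run on finite sums, the finite-rank operators $P_n$ suffice. If you keep your global averaging, you must supply the missing convergence argument; the finite-scale identity is the clean way around it.

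Second, and independently: your derivation of $G\subset X$ rests on the identity $\|f\|_X=\sup_{\|g\|_{X'}\le 1}\bigl|\int_0^1 fg\,dm\bigr|$, which is false in general. That supremum is by definition $\|f\|_{X''}$, and one only has $\|f\|_{X''}\le\|f\|_X$; equality requires the Fatou property, which is not among the hypotheses (an interpolation space between $L^1$ and $L^\infty$ need not have it). So your computation proves the upper Khintchine bound in $X''$, i.e.\ $G\subset X''$, not $G\subset X$. (Your symmetric computation for $G\subset X'$ is sound, because $\|h\|_{X'}=\sup_{\|f\|_X\le 1}\bigl|\int_0^1 fh\,dm\bigr|$ holds by the very definition of the associate norm.) The paper hits the same wall and closes it with a step you are missing: from $G\subset X''$ it passes to separable parts, using $G=G_0$ and $(X'')_0=X_0$, to conclude $G=G_0\subset(X'')_0=X_0\subset X$. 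Add this step (or an argument that the $X$- and $X''$-norms agree on the relevant functions) and that part of your proof is repaired.
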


The proof follows the ideas of \cite[Theorem 2.b.4]{MR540367}
and \cite{MR541648}. We need some auxiliary results.

\begin{proposition}\label{prop:basic_walsh} 
Let $X$ be an r.i.\ space on $[0,1]$ which is an
interpolation space between $L^1([0,1])$ and $L^\infty([0,1])$,
and $(w_{n_k})$ a $q$--lacunary subsequence of the Walsh system with $q >1$.
Then, $(w_{n_k})$ is a basic sequence in $X$.
\end{proposition}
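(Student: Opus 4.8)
The plan is to show that a $q$-lacunary sequence $(w_{n_k})$ is a basic sequence in $X$ by verifying Grunblum's criterion: there exists a constant $K>0$ such that for all integers $m \leq M$ and all scalars $(a_k)$,
$$
\Big\| \sum_{k=1}^{m} a_k w_{n_k} \Big\|_X \leq K \Big\| \sum_{k=1}^{M} a_k w_{n_k} \Big\|_X.
$$
This is the standard necessary and sufficient condition for a sequence of nonzero vectors to be a basic sequence, so the whole task reduces to establishing this uniform bound on partial-sum projections.

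The key idea is to exploit the norm equivalence with $\ell^2$ that is already available in $X$. I would treat two cases according to the embedding status of $X$. If $G \subset X$, then by Theorem \ref{th:rodin-semenov-walsh}, part (ii), there are constants $A(X,q), B(X,q)>0$ so that the $X$-norm of every finite lacunary Walsh sum is squeezed between $A(X,q)$ and $B(X,q)$ times the $\ell^2$-norm of its coefficients. Applying this to both sides of Grunblum's inequality gives
$$
\Big\| \sum_{k=1}^{m} a_k w_{n_k} \Big\|_X \leq B(X,q)\Big(\sum_{k=1}^m a_k^2\Big)^{1/2} \leq B(X,q)\Big(\sum_{k=1}^M a_k^2\Big)^{1/2} \leq \frac{B(X,q)}{A(X,q)} \Big\| \sum_{k=1}^{M} a_k w_{n_k} \Big\|_X,
$$
so $K = B(X,q)/A(X,q)$ works. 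The general case, where $G \subset X$ may fail, still yields the conclusion because every r.i.\ space satisfies $L^\infty \subset X \subset L^1$; one can therefore pass to $L^1$ (or $L^2$), where Theorem \ref{th:sagherzhou4} supplies the two-sided $\ell^2$-equivalence directly, and combine with the embedding inequalities relating the $X$-, $L^2$-, and $L^1$-norms to obtain the same monotonicity of partial sums up to a constant.

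The main obstacle is handling r.i.\ spaces $X$ for which the upper $\ell^2$-bound in $X$ is unavailable, i.e.\ when $G \nsubseteq X$, so that Theorem \ref{th:rodin-semenov-walsh} does not apply. Here one cannot simply quote the two-sided inequality in $X$; instead I expect to use the distributional majorization by the Rademacher system recorded in the remark following Theorem \ref{th:rodin-semenov-walsh} together with the boundedness of the dilation operator $\sigma_{1/C}$ on every r.i.\ space, which gives
$$
\Big\| \sum_{k=1}^{m} a_k w_{n_k} \Big\|_X \leq C^2 \Big\| \sum_{k=1}^{m} a_k r_k \Big\|_X.
$$
Since the Rademacher partial sums obey Grunblum's criterion in every r.i.\ space (the Rademacher system being a basic sequence there), one transfers monotonicity from the Rademacher side to the Walsh side, paying only the fixed factor $C^2$. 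The delicate point will be verifying that the majorization constant is uniform over all partial sums and does not depend on $m$ or $M$, which is precisely what the cited distributional majorization provides.
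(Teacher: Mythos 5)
Your reduction to Grunblum's criterion is the right framework, and your treatment of the case $G \subset X$ is correct: there the two-sided $\ell^2$-equivalence of Theorem \ref{th:rodin-semenov-walsh} immediately gives the uniform bound on partial sums. The gap is in the complementary case $G \nsubseteq X$, which is precisely where the proposition has content beyond that theorem (e.g.\ $X=L^\infty$, which is trivially an interpolation space but does not contain $G$). Your first fallback --- passing to $L^1$ or $L^2$ via $L^\infty \subset X \subset L^1$ --- cannot work, because the embeddings point the wrong way: to bound $\|\sum_{k\le m} a_k w_{n_k}\|_X$ from above you would need either $\|\cdot\|_X \lesssim \|\cdot\|_{L^1}$ (false), or an upper estimate of the $X$-norm by $(\sum_k a_k^2)^{1/2}$, which by Theorem \ref{th:rodin-semenov-walsh} ((iii)$\Rightarrow$(i)) is \emph{equivalent} to $G \subset X$ --- exactly what is being assumed to fail. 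The only embedding-based upper bound available goes through $L^\infty$, and $\|\sum_{k\le m} a_k w_{n_k}\|_{L^\infty}$ can be of order $\sum_{k\le m}|a_k|$, which is not controlled by the $\ell^2$-norm of the coefficients.

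Your second fallback, transferring Grunblum's inequality from the Rademacher system via distributional majorization, has the same defect: the chain
$$
\Big\| \sum_{k=1}^{m} a_k w_{n_k} \Big\|_X \le C^2 \Big\| \sum_{k=1}^{m} a_k r_k \Big\|_X \le C^2 K \Big\| \sum_{k=1}^{M} a_k r_k \Big\|_X
$$
must still be closed by the reverse estimate $\|\sum_{k=1}^M a_k r_k\|_X \lesssim \|\sum_{k=1}^M a_k w_{n_k}\|_X$, i.e.\ by $(r_k)$ being majorized in distribution by $(w_{n_k})$. For $q \ge 2$ this is available (Lemma \ref{lemma:1} gives equality of distributions, which is exactly the paper's argument in that case), but for $1<q<2$ the remark following Theorem \ref{th:rodin-semenov-walsh} states explicitly that this reverse majorization is an open problem, so it cannot be invoked. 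Thus your argument fails exactly in the case $1<q<2$ and $G \nsubseteq X$. It is telling that you never use the hypothesis that $X$ is an interpolation space: that hypothesis is what the paper's proof exploits in precisely this missing case. There, the dyadic averaging operators $A_s$ are uniformly bounded on $X$ (by interpolation, being contractions on $L^1$ and $L^\infty$), satisfy $A_s w_k = w_k$ for $k<2^s$ and $A_s w_k = 0$ for $k\ge 2^s$, and hence recover the partial sum $\sum_{k\le M} a_k w_{n_k}$ from the full sum up to at most $\alpha$ correction terms (where $q^\alpha\ge 2$), each of which is controlled through orthogonality in $L^1$ and the embedding $X\subset L^1$.
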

\begin{proof} The case when $q \geq 2$ follows from Lemma \ref{lemma:1} and from the fact that $(r_k)$
	is a basic sequence in $X$.
	
Let $1 < q < 2$.
For $s \in \mathbb{N}$, denote by $I^s_j = (j/2^s,(j+1)/2^s)$  the dyadic intervals
of order $s$,  $0 \leq j \leq 2^s-1$,
and consider the averaging operator 
$$
A_s f:=\sum_{j=0}^{2^s-1} \Big( \frac{1}{m(I^s_j)} \int_{I^s_j} f \, dm \Big) \chi_{I^s_j}, 
\qquad
f \in L^1([0,1]).    
$$
The operators $A_s \colon X \to X$ are uniformly bounded (see \cite[II,\S3.2]{MR649411}).

Let $0\leq k < 2^s$. Since $w_k$ is constant on 
 $\chi_{I^s_j}$ for $0\leq j \leq 2^s-1$, we have
$$
A_s ( w_k)=w_k, \qquad 0\leq k < 2^s.
$$
On the other hand, noting that for $k \geq 2^s$ and $ 0 \leq j \leq  2^s-1$,
$$
\int_{I^s_j} w_k \,dm=0,
$$  
it follows that  $A_s( w_k)=0$, $k \geq 2^s$.

Let $M,N \in \mathbb{N}$ with $M<N$. 
Note, for $1 \leq j \leq N$, that
\begin{equation} \label{eq:a_j}
|a_j| = \Big| \int_0^1 \Big(\sum_{k=1}^N a_k w_{n_k}\Big)  w_{n_j} \,dm \Big| 
\leq \Big\|\sum_{k=1}^N a_k w_{n_k}\Big\|_{L^1}
\leq C_1 \Big\| \sum_{k=1}^N a_k w_{n_k}\Big\|_X,
\end{equation}
  where $C_1$ denotes the constant in the continuous embedding $X \subset L^1$.

There are two cases. Suppose first that there exists $s \in \mathbb{N}$ such that
$2^{s-1} \leq n_M < 2^s \leq n_N$. Let $L$ such that $n_L < 2^s \leq n_{L+1}$, with  $M \leq L < L+1 \leq N$. Then,
$$
\sum_{k=1}^M a_k w_{n_k} = A_s\Big(\sum_{k=1}^N a_k w_{n_k} \Big) - \sum_{k=M+1}^L a_k w_{n_k}.
$$
Note that the last sum vanishes  in the case when $M=L$.
It follows that
$$
\Big\|\sum_{k=1}^M a_k w_{n_k}\Big\|_X \leq \Big\| A_s\Big(\sum_{k=1}^N a_k w_{n_k}\Big)\Big\|_X + \sum_{k=M+1}^L |a_k| \varphi_X(1),
$$
where $\varphi_X(t):=\| \chi_{[0,t]}\|_X$, $0 \leq t \leq 1$, denotes the fundamental function of the space $X$.

Suppose next that there exists $s \in \mathbb{N}$ such that
$2^s \leq  n_M <  n_N < 2^{s+1} $ (in this case necessarily $1 < q < 2$). Let $L$ such that 
$n_L < 2^s \leq n_{L+1}$, with  $L+1 \leq M$. Then,
$$
\sum_{k=1}^M a_k w_{n_k} = A_s\Big(\sum_{k=1}^N a_k w_{n_k}\Big) + \sum_{k=L+1}^M a_k w_{n_k},
$$
and so we have
$$
\Big\|\sum_{k=1}^M a_k w_{n_k}\Big\|_X \leq \Big\| \Big(A_s\sum_{k=1}^N a_k w_{n_k}\Big)\Big\|_X + \sum_{k=L+1}^M |a_k| \varphi_X(1).
$$

Let $\alpha \in \mathbb{N}$ be such that $q^\alpha \geq 2$. 
From the lacunary condition, we have in both cases that $|L-M| \leq \alpha$. 
Taking \eqref{eq:a_j} into account,
it follows that 
\begin{equation*}\begin{split}
\Big\| \sum_{k=1}^M a_k w_{n_k}\Big\|_X & \leq
\Big\|A_s \Big(\sum_{k=1}^N a_k w_{n_k}\Big)\Big\| + \alpha C_1 \varphi_X(1) \Big\| \sum_{k=1}^N a_k w_{n_k}\Big\|_X \\
& \leq \big(C+\alpha \, C_1\varphi_X(1)\big)\Big\| \sum_{k=1}^N a_k w_{n_k}\Big\|_X,
\end{split}\end{equation*}
which shows that $(w_{n_k})$ is a basic sequence in $X$.
\end{proof}

\begin{lemma}\label{lemma:lemmaB}
Let $X$ be an r.i.\ space on $[0,1]$ and $(w_{n_k})$ a
 $q$--lacunary sequence  of Walsh functions.
The following conditions are equivalent.
\begin{enumerate}[(i)]
 \item The operator $T \colon X \to \ell^2$ given by
\begin{equation}\label{eq:operator-T}
Tf:=\big( \langle w_{n_k} ,f \rangle \big)_{k \geq 1}
\end{equation} is continuous.
 \item The continuous embedding $G \subset X'$ holds.
\end{enumerate}\end{lemma}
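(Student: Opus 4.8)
The plan is to recognize $T$ as, up to the natural pairing, the Banach adjoint of the synthesis operator $S \colon \ell^2 \to X'$, $S\big((a_k)\big):=\sum_{k} a_k w_{n_k}$, and then to read off the continuity of $T$ from the upper Khintchine-type bound in the associate space $X'$. The link between the two operators is the single pairing identity, valid for finitely supported $(a_k)$ and $f \in X$,
$$
\sum_k a_k \langle w_{n_k}, f\rangle = \int_0^1 \Big(\sum_k a_k w_{n_k}\Big) f \, dm,
$$
so that $\langle Tf, (a_k)\rangle_{\ell^2} = \langle S((a_k)), f\rangle$. Both implications of the lemma then follow from this identity together with the duality descriptions of the $\ell^2$-norm and of the associate norm $\|\cdot\|_{X'}$.

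For \emph{(i)}$\Rightarrow$\emph{(ii)}, assume $\|Tf\|_{\ell^2} \leq C\|f\|_X$. For finitely supported $(a_k)$, the definition of $\|\cdot\|_{X'}$, the pairing identity and the Cauchy--Schwarz inequality in $\ell^2$ give
$$
\Big\|\sum_k a_k w_{n_k}\Big\|_{X'} = \sup_{\|f\|_X \leq 1} \Big|\sum_k a_k \langle w_{n_k}, f\rangle\Big| \leq \|(a_k)\|_{\ell^2}\, \sup_{\|f\|_X \leq 1}\|Tf\|_{\ell^2} \leq C\,\|(a_k)\|_{\ell^2}.
$$
Passing to the limit over partial sums, which are Cauchy in the complete space $X'$ by this very estimate applied to differences, extends the upper bound to all $(a_k)\in\ell^2$. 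The matching lower bound holds automatically for the r.i.\ space $X'$, since $X'\subset L^1$ and Theorem \ref{th:sagherzhou4} with $p=1$ yields $A(1,q)\|(a_k)\|_{\ell^2}\le\|\sum_k a_k w_{n_k}\|_{L^1}$; thus the two-sided inequality \emph{(iii)} of Theorem \ref{th:rodin-semenov-walsh}, applied to the r.i.\ space $X'$, holds, and its implication \emph{(iii)}$\Rightarrow$\emph{(i)} gives $G \subset X'$.

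For \emph{(ii)}$\Rightarrow$\emph{(i)}, since $G\subset X'$ and $X'$ is an r.i.\ space, Theorem \ref{th:rodin-semenov-walsh} (direction \emph{(i)}$\Rightarrow$\emph{(ii)}, applied to $X'$) provides a constant $C$ with $\|\sum_k a_k w_{n_k}\|_{X'}\le C\|(a_k)\|_{\ell^2}$ for our $q$-lacunary sequence. Then for $f\in X$, the $\ell^2$-duality, the pairing identity and the definition of $\|\cdot\|_{X'}$ yield
$$
\|Tf\|_{\ell^2} = \sup_{\|(a_k)\|_{\ell^2}\le 1}\Big|\int_0^1 \Big(\sum_k a_k w_{n_k}\Big) f\, dm\Big| \le \sup_{\|(a_k)\|_{\ell^2}\le 1}\Big\|\sum_k a_k w_{n_k}\Big\|_{X'}\|f\|_X \le C\|f\|_X,
$$
so $T$ is continuous.

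The only point that requires care, and hence the main (if mild) obstacle, is the passage from finitely supported coefficient sequences to arbitrary $(a_k)\in\ell^2$: one must check that $\sum_k a_k w_{n_k}$ genuinely converges in $X'$ and that the pairing identity and the estimates survive the limit. This is settled by the uniform bound on partial sums together with the completeness of $X'$. It is also essential that the argument is carried out in the associate space $X'$ under the integral pairing, rather than in the abstract dual $X^{*}$, as this is precisely what places the adjoint of $T$ in a space to which Theorem \ref{th:rodin-semenov-walsh} applies.
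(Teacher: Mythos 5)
Your proof is correct and follows essentially the same route as the paper: both directions hinge on identifying the adjoint of $T$ with the synthesis map $(a_k)\mapsto\sum_k a_k w_{n_k}$ into the associate space $X'$, and then invoking Theorem \ref{th:rodin-semenov-walsh} applied to the r.i.\ space $X'$ (only the upper bound being needed for \emph{(i)}$\Rightarrow$\emph{(ii)}, via its implication \emph{(iii)}$\Rightarrow$\emph{(i)}). The only cosmetic differences are that the paper phrases \emph{(i)}$\Rightarrow$\emph{(ii)} through the abstract adjoint $T'\colon\ell^2\to X'$ where you compute the associate norm of finite sums directly and pass to the limit, and that in \emph{(ii)}$\Rightarrow$\emph{(i)} the paper plugs in the specific coefficients $a_k=\langle w_{n_k},f\rangle$ and divides, where you take a supremum over the $\ell^2$ unit ball---both are equivalent uses of H\"older's inequality for the pairing of $X$ and $X'$.
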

\begin{proof} 
$\emph{(i)} \Rightarrow \emph{(ii)}$
Assume that $T:X \to \ell^2$ is continuous. 
Then, the adjoint operator $T':\ell^2 \to X'$ is continuous. 
Denote by $(e_k)$ the canonical basis of $\ell^2$. Let $f \in X$. Then,
$$
\langle T'e_k , f \rangle=\langle e_k , Tf \rangle = \langle w_{n_k} , f \rangle,
$$
and so $T'e_k = w_{n_k}$. Hence, for $b=(b_k)_1^\infty \in \ell^2$, we have
$$
T'b=T'\Big( \sum_{k=1}^\infty b_k e_k \Big)=
\sum_{k=1}^\infty b_k \, T'(e_k) =
\sum_{k=1}^\infty b_k w_{n_k}. 
$$
Together with the  continuity of $T'$, it follows that
$$
\Big\|\sum_{k \geq 1} b_{k} w_{n_k}\Big\|_{X'}=\|T'(b_{k}) \|_{X'} \leq \|T'\| \|(b_{k})\|_{\ell^2},
$$
for all $(b_k) \in \ell^2$.
This condition,
as in the proof of Theorem \ref{th:rodin-semenov-walsh},
implies $G \subset X'$.

$\emph{(ii)} \Rightarrow \emph{(i)}$
If  $(w_{n_k})$ is a $q$--lacunary subsequence of Walsh functions and $G \subset X'$, 
we have from Theorem \ref{th:rodin-semenov-walsh} applied to $X'$ that 
there exists a constant $B(X',q)>0$  such that, for $N \geq 1$, we have
$$
\Big\|\sum_{k = 1}^N \langle w_{n_k},f \rangle w_{n_k} \Big\|_{X'} 
\leq B(X',q) \Big( \sum_{k =1}^N \langle w_{n_k},f \rangle^2\Big)^{1/2}.
$$
Let $f \in X$. Fix $N \geq 1$.
Then, from Hölder's inequality for $X$ and $X'$,
\begin{equation*}\begin{split}
\sum_{k = 1}^N \langle w_{n_k},f \rangle^2
& = \int_0^1 f(t) \Big( \sum_{k = 1}^N \langle w_{n_k},f \rangle w_{n_k}\Big)(t) \, dt \\
& \leq \|f\|_X \Big\| \sum_{k = 1}^N \langle w_{n_k},f \rangle w_{n_k}\Big\|_{X'} \\
&  \leq  B(X',q) \|f\|_X \Big( \sum_{k = 1}^N \langle w_{n_k},f \rangle^2\Big)^{1/2}.
\end{split}\end{equation*}
It follows that 
$$
\Big( \sum_{k = 1}^N \langle w_{n_k},f \rangle^2\Big)^{1/2} 
\leq B(X',q) \|f\|_X, \qquad f \in X,
$$ 
that is, $\|Tf\|_{\ell^2} \leq B(X',q) \|f\|_X$, and so $\emph{(i)}$ is established.
\end{proof}

For $n \geq 1$ and any $0 \leq j,k \leq 2^n-1$,
since $w_k$ is constant on the dyadic intervals of order $n$, we have that
$w_k(I^n_j)$ is well--defined and takes values $1$ or $-1$. 
The following property appears in \cite[Theorem 2.b.4]{MR540367}:
$$
w_k(I^n_j)=w_j(I^n_k), \qquad 0 \leq j,k \leq 2^n-1,
$$
but no detail of its proof is given. For the sake of completeness, we give a proof of this fact.

\begin{lemma} \label{lemma:lemmaA}
For $n \geq 1$ and $0 \leq j,k \leq 2^n-1$, we have
$$
w_k(I^n_j)=w_j(I^n_k).
$$
\end{lemma}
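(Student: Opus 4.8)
The plan is to reduce the identity to a direct computation of the sign $w_k(I^n_j)$ in terms of the binary digits of $j$ and $k$; once this is done, the asserted symmetry amounts to a single reindexing of a bilinear exponent.

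First I would record the behaviour of each Rademacher function on a dyadic interval of order $n$. Since $r_i(t)=\sign\sin(2^i\pi t)$ takes the constant value $(-1)^m$ on the dyadic interval $I^i_m$, and since for $i\le n$ the interval $I^n_j$ is contained in $I^i_m$ with $m=\lfloor j/2^{n-i}\rfloor$, we get
$$
r_i(I^n_j)=(-1)^{\lfloor j/2^{n-i}\rfloor}.
$$
Writing $j=\sum_{l=0}^{n-1} b_l(j)\,2^l$ for the binary expansion of $j$, the parity of $\lfloor j/2^{n-i}\rfloor$ is exactly the digit $b_{n-i}(j)$, so that $r_i(I^n_j)=(-1)^{b_{n-i}(j)}$.

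Next I would combine these factors through Paley's numbering. Writing $k=\sum_{i=1}^{n} a_i\,2^{i-1}$, so that $w_k=r_1^{a_1}\cdots r_n^{a_n}$ and $a_i=b_{i-1}(k)$, the previous step yields
$$
w_k(I^n_j)=\prod_{i=1}^{n}\big((-1)^{b_{n-i}(j)}\big)^{b_{i-1}(k)}=(-1)^{E(j,k)},\qquad E(j,k):=\sum_{p=0}^{n-1}b_p(k)\,b_{n-1-p}(j),
$$
after the substitution $p=i-1$. The identity will then follow by showing $E(j,k)=E(k,j)$ modulo $2$: the change of summation index $p\mapsto n-1-p$ transforms $\sum_p b_p(k)\,b_{n-1-p}(j)$ into $\sum_p b_{n-1-p}(k)\,b_p(j)=E(k,j)$.

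The one point requiring care is the bookkeeping of indices. The Walsh product runs over $r_1,\dots,r_n$ in increasing order, whereas the digit $b_{n-i}(j)$ governing $r_i$ runs in the reverse order; consequently the exponent $E$ pairs the digits of $k$ with the \emph{reversed} digits of $j$, rather than with the digits in the same position. It is exactly this reversal that renders $E$ symmetric under interchanging $j$ and $k$, and keeping the two indexing conventions straight is the main obstacle to a clean write-up.
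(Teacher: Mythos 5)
Your proposal is correct and takes essentially the same approach as the paper: both arguments compute $w_k(I^n_j)$ explicitly as $(-1)$ raised to a bilinear pairing of the binary digits of $k$ with the \emph{reversed} binary digits of $j$, and then conclude from the symmetry of that exponent under interchanging $j$ and $k$. The only difference is cosmetic: you evaluate $r_i(I^n_j)$ via the nesting of dyadic intervals and the parity of $\lfloor j/2^{n-i}\rfloor$, while the paper performs the same computation using fractional parts $\{2^i x\}$ of a point $x\in I^n_j$.
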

\begin{proof} 
The case when either $k=0$ or $j=0$ follows from the fact that
 $w_k(I^n_0)=1$ and $w_0(I^n_k)=1$ for $0 \leq k \leq 2^n-1$.

Fix $n \geq 1$ and $1 \leq j, k \leq 2^n - 1$.
Let
\begin{equation*}\begin{split}
k & =a_1 2^0 + a_2 2^1 + \ldots + a_n 2^{n-1}, \qquad a_1,\ldots,a_n \in \{0,1\}, \\
j & =b_1 2^0 + b_2 2^1 + \ldots + b_n 2^{n-1}, \qquad b_1,\ldots,b_n \in \{0,1\}.
\end{split}\end{equation*}
Fix $x \in I^n_{j}$, and denote by $\{t\}$ the fractional part of $t \in \mathbb{R}$. From
$$
r_i(x)=\sign \sin(2^i \pi x) = r_1(\{2^{i-1} x\}),
$$
we have
\begin{equation}\label{eq:theta}\begin{split}
w_{k}(x) & =(r_1(x))^{a_1} \cdot (r_2(x))^{a_2} \ldots (r_n(x))^{a_n} \\
& =(r_1(x))^{a_1} \cdot (r_1(\{2x\}))^{a_2} \ldots (r_1(\{2^{n-1}x\}))^{a_n}.
\end{split}\end{equation}
Now we use the fact that 
$x \in I^n_{j}$ if and only if 
$$
x=\frac{b_n}{2} + \frac{b_{n-1}}{2^2} + \ldots + \frac{b_1}{2^n} + \varepsilon,
$$
with $0<\varepsilon<1/2^n$. Then, for any $0 \leq i \leq n-1$,
\begin{equation*}\begin{split}
\{2^ix \} & = \Big\{2^i \Big( \frac{b_n}{2} + \frac{b_{n-1}}{2^2} + \ldots + \frac{b_1}{2^n} + \varepsilon \Big)\Big\} \\
& = \Big\{  \frac{b_{n-i}}{2} + \frac{b_{n-i-1}}{2^2} + \ldots + \frac{b_1}{2^{n-i}} + 2^i\varepsilon \Big\}.
\end{split}\end{equation*}
It follows that
$$
r_1(\{2^ix\})=
\left\{\begin{array}{rr}
 1 & \text{if } b_{n-i}=0, \\
 -1 & \text{if } b_{n-i}=1.
\end{array}\right.
$$
Hence,
$$
r_1(\{2^ix\})=(-1)^{b_{n-i}}, \qquad 0 \leq i \leq n-1,
$$
which together with \eqref{eq:theta} implies that
\begin{equation*} \begin{split}
 w_{k}(x) 
 & =(-1)^{a_1 \cdot b_n} \cdot (-1)^{a_2 \cdot b_{n-1}} \ldots (-1)^{a_n \cdot b_1}.
\end{split}\end{equation*}
The symmetry in $a_i$ and $b_i$ 
of the expression above implies that $w_{k}(I^n_{j}) = w_{j}(I^n_{k})$.
\end{proof}

We can now proceed to the proof of the main result.

\begin{proof}[Proof of Theorem \ref{th:complemented-walsh}]

It is clear that $\emph{(ii)} \Rightarrow \emph{(iii)}$.

$\emph{(i)} \Rightarrow \emph{(ii)}$
Let $(w_{n_k})$ be a $q$--lacunary subsequence of Walsh functions.
From the assumption that $G \subset X'$,  Lemma \ref{lemma:lemmaB} implies that
the operator $T:X \to \ell^2$ in \eqref{eq:operator-T} 
is continuous, that is,
$$
\Big(\sum_{k \geq 1} \langle w_{n_k},f \rangle^2 \Big)^{1/2}\leq \|T\| \|f\|_X, \qquad f \in X.
$$
Denote by $P:X \to [w_{n_k}]_X$ the projection
\begin{equation}
 \label{eq:projection}
 f \mapsto Pf:=\sum_{k \geq 1} \langle w_{n_k},f \rangle w_{n_k}.
\end{equation}
From $G \subset X$ and Theorem \ref{th:rodin-semenov-walsh} we
have
$$\|Pf\|_X = \Big\| \sum_{k \geq 1} \langle w_{n_k},f \rangle w_{n_k} \Big\|_X \leq B(X,q)
\Big(\sum_{k \geq 1} \langle w_{n_k},f \rangle^2 \Big)^{1/2}.
$$
It follows that 
$$
\|Pf\|_X \leq B(X,q) \|T\| \|f\|_X, \qquad f \in X,
$$
that is, the subspace $[w_{n_k}]_X$ is complemented in $X$.

$ \emph{(iii)} \Rightarrow \emph{(i)}$ We follow the ideas in the proof 
of the complementability result for Rademacher functions (Theorem \ref{th:complementability})
by Lindenstrauss and Tzafriri \cite[Theorem 2.b.4]{MR540367}.
Assume that $(w_{n_k})$ is a lacunary sequence with $q>1$ such that $[w_{n_k}]_X$
is complemented in $X$. Then, there exists a bounded linear operator $Q:X \to [w_{n_k}]_X$
with $Q^2=Q$. Since $(w_{n_k})$ is a basic sequence, there exists $(q_{n_k}) \subset X^*$ such that 
$$
Qf:=\sum_{k \geq 1} q_{n_k}(f) w_{n_k}, \qquad f \in X.
$$

Let $n \geq 1$ and $X_n$ be the linear subspace generated in $X$ by the characteristic functions 
of the dyadic intervals of order $n$. From the linear independence of the Walsh functions
and from the fact that
$w_k$ is constant on the dyadic intervals of order $n$
for $0 \leq k \leq 2^n-1$, we have that $X_n$ coincides 
with the linear space generated by $\{w_k:k=0,\ldots,2^n-1\}$, that is,
$$
X_n=\big\langle \{\chi_{I^n_k}:k=0,\ldots,2^n-1 \} \big\rangle = \big\langle \{ w_k:k=0,\ldots,2^n-1  \}\big\rangle.
$$

Let $w_{m_1}, \ldots,w_{m_N}$ be
the Walsh functions of the sequence $(w_{n_k})$ with order less or equal than $n$.
Note that $N$ and $m_1, \ldots m_N$ depend on $n$.
Denote by $Q_n$ the restriction of $Q$ to $X_n$, $Q_n:X_n \to X_n$, that is,
\begin{equation}\label{eq:Qn}
Q_n(f):= \sum_{k=1}^N  q_{m_k}( f) w_{m_k}, \qquad f \in X_n.
\end{equation}
For  $0\leq j,k \leq 2^n-1$, denote 
$$
\theta_{j,k}:=w_j(I^n_k).
$$
Let
$T_j:X_n \to X_n$ be the operator defined by
$$
T_j (w_k): =\theta_{j,k} w_k.
$$
Since $X$ is r.i.\ 
and the distribution function of $f$ and $T_j( f)$ coincide for $f \in X_n$,
we have  $\|T_j\|=1$ for $1 \leq j \leq 2^n$. 

Denote by $P_n: X_n \to X_n$ the restriction of the projection in \eqref{eq:projection} to $X_n$,
that is,
$$
P_n(f):= \sum_{k=1}^N \langle w_{m_k},  f   \rangle w_{m_k}, \qquad f \in X_n.
$$ 
We will prove that
\begin{equation}\label{eq:complemented-PQ}
P_n=\frac{1}{2^n} \sum_{j=1}^{2^n} T_jQ_nT_j. 
\end{equation} 

From \eqref{eq:projection},
$$
P_n(w_i)=\sum_{k=1}^N \langle w_{m_k} ,w_i \rangle w_{m_k}.
$$
Since $\langle w_{m_k},w_i\rangle=\delta_{m_k,i}$, it follows that 
$P_n(w_i)=w_i$ for $i \in \{m_1,\ldots,m_N\}$. Otherwise,
$P_n(w_i)=0$.

On the other hand,
\begin{equation*}
 \begin{split}
 \frac{1}{2^n} \sum_{j=1}^{2^n} T_jQ_nT_j(w_i) 
& =\frac{1}{2^n} \sum_{j=1}^{2^n} T_j \Big( \sum_{k=1}^N  q_{m_k}(\theta_{j,i}w_i) w_{m_k} \Big) \\
& =\sum_{k=1}^N q_{m_k}(w_i)\Big( \frac{1}{2^n} \sum_{j=1}^{2^n} \theta_{j,i}\theta_{j,m_k} \Big)w_{m_k} .
 \end{split}
\end{equation*}
Since $Q_n$ is a projection, we have from \eqref{eq:Qn} that $q_{m_k}(w_i)=\delta_{m_k,i}$.  
Thus, \eqref{eq:complemented-PQ} follows from 
$$
\frac{1}{2^n} \sum_{j=1}^{2^n} \theta_{j,i}\theta_{j,m_k}=\delta_{i,m_k}.
$$
Given integers $k,n$ and $m$, we write 
$k=n\oplus m$ whenever the following relation holds:
$$
w_k=w_nw_m.
$$
Then, from Lemma \ref{lemma:lemmaA},
$$
\theta_{j,i}\theta_{j,m_k} = w_j(I^n_i)w_j(I^n_{m_k})=w_i(I^n_j)w_{m_k}(I^n_j)=w_{i \oplus m_k}(I^n_j),
$$
and so it follows that
\begin{equation*}
\begin{split}
\frac{1}{2^n} \sum_{j=1}^{2^n} \theta_{j,i}\theta_{j,m_k} &
=\frac{1}{2^n} \sum_{j=1}^{2^n} w_{i \oplus m_k}(I^n_j) 
=\frac{1}{2^n} \sum_{j=1}^{2^n} \frac{1}{m(I^n_j)} \int_{I^n_j} w_{i\oplus m_k} \, dm\\
&=\int_0^1  w_{i\oplus m_k} \, dm
=\delta_{i,m_k},
\end{split}
\end{equation*}
which proves \eqref{eq:complemented-PQ}.

Let us see that the operators $P_n:X_n \to X_n$ have
 uniformly bounded norm (in $n$). 
From \eqref{eq:complemented-PQ}, since $\|T_j\| = 1$, we have for any $f \in X_n$ that
\begin{equation*} \begin{split}
\|P_nf\|_{X_n} & 
= \Big\|\frac{1}{2^n} \sum_{j=1}^{2^n} T_jQ_nT_jf  \Big\|_{X_n} 
\leq  \frac{1}{2^n} \sum_{j=1}^{2^n} \|T_jQ_nT_jf\|_{X_n} \\
& \leq  \frac{1}{2^n} \sum_{j=1}^{2^n} \|Q_n\| \|f\|_{X_n} 
\leq \|Q\| \|f\|_{X_n},
\end{split}\end{equation*}
that is, $\|P_nf\| \leq \|Q\|$ for all $n \geq 1$.

Next we show that $G \subset X'$ follows from the fact that $P_n:X_n \to X_n$ have
 uniformly bounded norm.
Denote by $T_n:X_n \to \ell^2$ 
the operator given by
$$
T_nf:=(\langle w_{m_k},f\rangle)_{k=1}^{N}, \qquad f \in X_n.
$$
From $X \subset L^1$, there exists a constant $C_1>0$ such 
that $C_1\|f\|_{L^1} \leq  \|f\|_X$ for all $f \in X$. Together with
Theorem \ref{th:sagherzhou4} for $p=1$,
\begin{equation*} \begin{split}
{C_1} {A(1,q)}\Big(\sum_{k=1}^N\langle w_{m_k},f\rangle^2\Big)^{1/2}
& \leq  C_1 \Big\| \sum_{k=1}^{N} \langle w_{m_k},f\rangle w_{m_k} \Big\|_{L^1} \\
& \leq \Big\| \sum_{k=1}^{N} \langle w_{m_k},f\rangle w_{m_k} \Big\|_{X} 
 =\|P_nf\|_{X_n} \\
& \leq \|Q\| \|f\|_X.
\end{split}\end{equation*}
It follows, for some constant $C_2 >0$, that 
$$
\|T_nf\|_2 \leq C_2 \|f\|_{X_n}, \qquad f \in X_n,
$$
for all  $n \geq 1$. Thus, the adjoint operator
$$
T'_n: \ell^2 \to X'_n
$$
is bounded with $\|T'_n\| \leq C_2$, $n \geq 1$.
Since, for any $(b_k) \in \ell^2$, 
$$
T'_n(b_k)=\sum_{k=1}^{N} b_{k} w_{m_k},
$$
we have,
as in the proof of Lemma \ref{lemma:lemmaB},   that
$$
\Big\| \sum_{k=1}^{N} b_{k} w_{m_k} \Big\|_{X'}  \leq \|T'_n\|
\Big( \sum_{k=1}^{N} b_k^2 \Big)^{1/2}
\leq C_2 \Big(\sum_{k=1}^{N} b_k^2 \Big)^{1/2}. 
$$
Since this inequality holds for all $n\geq 1$, we have 
$$
\Big\| \sum_{k \geq 1} b_{k} w_{n_k} \Big\|_{X'}  
\leq C_2 \Big(\sum_{k\geq 1} b_k^2 \Big)^{1/2}.
$$
This inequality, together with the embedding $X' \subset L^1$ and Theorem \ref{th:sagherzhou4},
shows that the subspace $[w_{n_k}]_{X'}$ 
is isomorphic to $\ell^2$. From Theorem \ref{th:rodin-semenov-walsh}, 
this is equivalent to $G \subset X'$ .

Now we show that $G \subset X$. Let $n \geq 1$. For $f \in X_n$ and $g \in X_n'$,
\begin{equation*}\begin{split}
\int_0^1 P_n(f) \, g \,dm
& = \int_0^1 \Big( \sum_{k=1}^{N} \langle w_{m_k},f \rangle w_{m_k} \Big) g \,dm \\
& = \sum_{k=1}^{N} \langle w_{m_k},f \rangle \langle w_{m_k}, g\rangle  \\
& = \int_0^1 f \, P_n(g) \,dm.
\end{split}\end{equation*}
Thus,
$$
\Big| \int_0^1 f \, P_n(g) \,dm \Big| =
\Big| \int_0^1 P_n(f) \,  g \,dm \Big|\leq \|P_n(f)\|_X \|g\|_{X'} \leq \|Q\| \|f\|_X \|g\|_{X'}.
$$
It follows that
\begin{equation*}\begin{split}
\|P_ng\|_{X'_n} 
& = \sup \Big\{ \Big|\int_0^1 f \,  P_n(g) \,dm \Big| : \|f\|_{X_n}\leq 1 \Big\} \\
& \leq \|Q\| \|g\|_{X'_n}.
\end{split}\end{equation*}

The argument above, together with $X'_n$ in place of $X_n$, shows that $G \subset X''$
follows from the fact that $P_n: X'_n \to X'_n$ is uniformly bounded. 
Thus, for the separable parts we have that
$G_0 \subset (X'')_0$.  
Since $(X'')_0=X_0$ it follows that
$$
G = G_0 \subset (X'')_0 = X_0 \subset X,
$$
which concludes the proof.

\end{proof}

\paragraph{\textbf{Acknowledgements.}} 
This work is part of the Ph.D. thesis of the author which 
has been written
at University of Sevilla under the supervision of Prof.\ G.\ P.\ Curbera.




\bibliography{bibliography}
\bibliographystyle{plain}

\end{document}